\documentclass[3p]{elsarticle}
\usepackage{amsmath}
\usepackage{amssymb}
\usepackage{amsbsy}
\usepackage{mathrsfs}
\usepackage{tikz}
\usepackage{pgfplots}
\usepackage{cases}
\usepackage{hyperref} 
\usepackage{comment}
\usepackage{ulem}
\usepackage{subcaption}
\usepackage{float}
\usepackage{soul,cancel}
\def\phi {{\varphi}}

\newtheorem{theorem}{Theorem}
\newtheorem{algorithm}{Algorithm}[section]
\newtheorem{lemma}[theorem]{Lemma}
\newdefinition{remark}{Remark}
\newdefinition{hyp}{Hypothesis}
\newdefinition{ass}{Assumption}
\newproof{proof}{Proof}

\def\V#1{\mathbf{#1}}
\def\Vt#1{\mathbf{\tilde{#1}}}
\newcommand{\Mbd}{\mathcal{M}}
\newcommand{\divergence}{\mathrm{\nabla \cdot}}
\newcommand{\Lmap}{\mathrm{L}}
\newcommand{\Linvmap}{\mathrm{L^{-1}}}

\def\prodL2#1{\left(#1\right)}

\newcommand{\normBoch}[1]{{\left\vert\kern-0.25ex\left\vert\kern-0.25ex\left\vert #1 
\right\vert\kern-0.25ex\right\vert\kern-0.25ex\right\vert}}

\newcommand{\DY}[1]{{\color{black}{#1}}}

\journal{     }

\begin{document}
\begin{frontmatter}

\title{Steady Incremental Viscosity Splitting Method for solving the stationary Navier-Stokes equation} 

\author[labelA]{Aziz Takhirov} 
\ead{atakhirov@sharjah.ac.ae}
\address[labelA]{Department of Mathematics, University of Sharjah, UAE}
\author[labelB]{Driss Yakoubi}
\ead{driss.yakoubi@devinci.fr}
\address[labelB]{De Vinci Higher Education, De Vinci Research Center, Paris, France.}


\begin{abstract}

We develop a novel and efficient iterative scheme for solving incompressible steady Navier-Stokes equations. The method is an adaptation of the Incremental Viscosity Splitting approximation for unsteady flows \cite{Yak2023} to steady equations. At each nonlinear iteration, the scheme requires solving an elliptic PDE for the velocity variable and a system with an SPD matrix for the pressure variable, which remains the same across all nonlinear iterations. The method can also be interpreted as an algebraic splitting approach. We prove boundedness and geometric convergence. Numerical tests illustrate the efficiency of the proposed algorithm.  
\end{abstract}

\begin{keyword}
Navier-Stokes, Viscosity Splitting, Algebraic Splitting, Yosida Splitting
\end{keyword}

\end{frontmatter}

 \section{Introduction}\label{Sec-Intro}
 The goal of this paper is to present a novel splitting method for solving the stationary incompressible Navier-Stokes equation
\begin{align}
- \nu\Delta \V{u} 
+ \V{u} \cdot \nabla \V{u}  + \nabla p - \gamma \nabla \divergence \V{u} &= \V{f}  \quad \text{in }  \Omega, \label{eq:NSE1} \\
\nabla \cdot \V{u} &= 0 \quad \text{ in } \Omega, \label{eq:NSE2} \\
\V{u} & = \V{0} \quad \text{on } \partial \Omega,
\label{eq:NSE3}
\end{align}
where $\V{u}$ is the velocity of the fluid, $p$ is the pressure, $\V{f}$ is the body forcing, $\nu$ is the kinematic viscosity. For simplicity, we consider homogeneous Dirichlet boundary conditions for $\V{u}$. The grad-div stabilization term $- \gamma \nabla \divergence \V{u}$, $\gamma \ge 0$, in \eqref{eq:NSE1} is zero for the exact solution, but it will be an important ingredient for constructing our approximation. Numerous advantages of the grad-div stabilization are well-known, cf. \cite{JenJohLinReb2014,OlsReu2004,https://doi.org/10.1002/fld.3654,MOULIN2019718}. 

Solving the system \eqref{eq:NSE1}-\eqref{eq:NSE3} is necessary for moderate Reynolds number flows or time-averaged unsteady Navier-Stokes equations. To fix the ideas, let's consider the standard Picard linearization \cite{GirRav1986} of \eqref{eq:NSE1}-\eqref{eq:NSE3}: 
\begin{align}
- \nu\Delta \V{u}_k 
+ \V{u}_{k-1} \cdot \nabla \V{u}_k + \nabla p_k  &= \V{f}  \quad \text{in } \; \Omega, \label{eq:Picard1} \\
\nabla \cdot \V{u}_k &= 0  \quad \text{ in } \; \Omega, \label{eq:Picard2} \\
\V{u}_k & = \V{0} \quad \text{on } \; \partial \Omega.
\label{eq:Picard3}
\end{align}
Denoting by $\V{U}_k \in \mathbb{R}^n, \V{P}_k \in \mathbb{R}^m$ the coefficient vectors corresponding to the spatial discretizations of $\V{u}_k$ and $p_k$, \eqref{eq:Picard1}-\eqref{eq:Picard3}, requires the solution of the following non-symmetric saddle-point linear system
\begin{equation}\label{eq:PicardSystem1}
\begin{bmatrix} 
A_{k-1} & B^T \\ 
B &   0
\end{bmatrix}
\begin{bmatrix} \V{U}_k  \\ 
\V{P}_k  
\end{bmatrix}
= \begin{bmatrix} \V{F} \\ \V{0} \end{bmatrix},
\end{equation}
at each iteration $k$, where $A_{k-1}$ is the $n \times n$ matrix corresponding to velocity terms in \eqref{eq:Picard1}, and $B$ is $m \times n$ matrix corresponding to the pressure term. Direct solvers usually run out of memory for 3D problems, and solving this saddle-point system iteratively is also notoriously difficult. 

Since the classical work of Chorin \cite{Cho1968}, the numerical approximation of the unsteady counterpart of system \eqref{eq:NSE1}-\eqref{eq:NSE3} that gives easier linear systems to solve has received much attention in the literature  \cite{Gue2009,GueMinShe2006a}. Many of these fractional-type schemes heavily rely on the presence of the discretized time derivative of $\V{u}$. On the other hand, such splitting schemes for genuinely steady problems are less studied, cf. \cite{chen17,TC23,GEREDELI2023114920}. 

One can formally obtain a decoupling solution approach for \eqref{eq:PicardSystem1} by performing the following (incremental) block LU decomposition of \eqref{eq:PicardSystem1} as
\begin{equation}\label{eq:PicardSystem2}
\begin{bmatrix} 
A_{k-1} & 0 \\ 
B &  -B A_{k-1}^{-1} B^T 
\end{bmatrix}
\begin{bmatrix} 
I & A^{-1}_{k-1} B^T \\
0 & I 
\end{bmatrix}
\begin{bmatrix} \V{U}_k  \\ 
\V{P}_k - \V{P}_{k-1} 
\end{bmatrix}
= \begin{bmatrix} \V{F} - B^T \V{P}_{k-1} \\ \V{0} \end{bmatrix}. 
\end{equation}
The system \eqref{eq:PicardSystem2} formally splits the velocity, and the pressure solves. However, the pressure solve needs the inversion of the $m\times m$ Schur complement matrix $S_{k-1}=-B A_{k-1}^{-1} B^T$ at each iteration $k$, rendering the scheme inefficient for large 3D problems.

More effective algebraic splitting methods for steady incompressible Navier–Stokes equations can be derived by observing that \cite{VIGUERIE2018271}, for any two $n \times n$ matrices $H_1, H_2$, an ILU decomposition of the form
\begin{equation}\label{eq:PicardSystem3}
\begin{bmatrix} 
A_{k-1} & 0 \\ 
B &  -B H_1 B^T 
\end{bmatrix}
\begin{bmatrix} 
I & H_2 B^T \\
0 & I 
\end{bmatrix}
\begin{bmatrix} \V{U}_k  \\ 
\V{P}_k - \V{P}_{k-1} 
\end{bmatrix}
= \begin{bmatrix} \V{F} - B^T \V{P}_{k-1} \\ \V{0} \end{bmatrix} 
\end{equation}
is also strongly consistent approximations of \eqref{eq:NSE1}-\eqref{eq:NSE3}, provided they generate stable schemes. Using this observation, the authors of \cite{rebholz2019efficient} considered an iterative Incremental Picard-Yosida scheme based on the following ILU factorization:
\begin{equation}\label{eq:IPY}
\begin{bmatrix} 
A_{k-1} & B^T \\ 
B &   B\, \left(A^{-1}_{k-1} -  \widetilde{A}^{-1}\right) \, B^T
\end{bmatrix}
= 
\begin{bmatrix} 
A_{k-1} & 0 \\ 
B &  -B \widetilde{A}^{-1} B^T 
\end{bmatrix}
\begin{bmatrix} 
I & A^{-1}_{k-1} B^T \\
0 & I 
\end{bmatrix},
\end{equation}
where the $n\times n$ matrix $\widetilde{A}$ corresponds to linear velocity terms in \eqref{eq:NSE1}. 
A related algebraic splitting of \cite{VIGUERIE2018271} is based on separating the nonlinear term into implicit and explicit parts using a weight parameter $\alpha \in [0,1]$:
\begin{equation}\label{eq:PicardSystem4}
\begin{bmatrix} 
\widetilde{A} + \alpha C_{k-1} & B^T \\ 
B &  0 
\end{bmatrix} = 
\begin{bmatrix} 
\widetilde{A} + \alpha C_{k-1} & 0 \\ 
B &  -B \widetilde{A}^{-1} B^T 
\end{bmatrix}
\begin{bmatrix} 
I & \widetilde{A}^{-1} B^T \\
0 & I 
\end{bmatrix},    
\end{equation}
where the right-hand-side vector is 
\begin{equation}
    \label{eq:PicardSystem5}
\begin{bmatrix} \V{F} - B^T \V{P}_{k-1} - (1-\alpha) C_{k-1} \\ \V{0} \end{bmatrix},
\end{equation}
and $C_{k-1}$ is of size $n \times n$ corresponds to convection term. 

In this work, inspired by \cite{Yak2023}, we propose a novel Steady Incremental Viscosity Splitting (SIVS) method with grad-div stabilization. Unlike projection schemes, the unsteady Viscosity Splitting scheme \cite{Yak2023} yields an end-of-step velocity field that is both divergence-free and satisfies the correct boundary conditions. The scheme that we propose in this paper also gives (discretely) div-free velocity subject to full Dirichlet boundary conditions. Moreover, the scheme can be interpreted as an algebraic splitting method akin to \eqref{eq:PicardSystem3}, which requires much simpler linear system solvers than the standard Picard system \eqref{eq:PicardSystem2}. We also mention the related work \cite{10.1063/5.0195184}, which studied a non-incremental viscosity splitting method for a stationary second-grade fluid model without any grad-div stabilizations.

This paper is arranged as follows. In the next Section, we introduce the notations and some preliminary results. In Section \ref{Sec-Algo}, we present our Algorithm, establish uniform boundedness and convergence results, and also present the algebraic splitting viewpoint of our scheme. In Section \ref{Sec-Simulation}, we present numerical tests, and Section \ref{Sec-Conclusion} includes the concluding remark.

\section{Notations and preliminaries}\label{Notations}
Throughout this work, vector fields and spaces of the form $\mathcal{S}^d$ are denoted using boldface notation.
\noindent Standard notations for Sobolev spaces and corresponding norms will be used throughout the paper, see e.g., \cite{Ada1975}. In particular, $(\cdot,\cdot)$ and $\|\cdot \|$ denote $L^2(\Omega)$ inner product and the corresponding norm, respectively. $\V{H}^{k}$, where $k$ is an integer greater than zero, will denote the space of vector-valued functions each of whose $n$ components belong to $H^k$, the Sobolev space of real-valued functions with square integrable derivatives of order up to $k$ equipped with the usual norm $\|\cdot\|_k$. The dual space of $\V{H}^{1}_0(\Omega)$ will be denoted by $\V{H}^{-1}$ \DY{and the duality pairing between these two spaces is denoted by $\langle \cdot,\cdot \rangle$.} The norm in 
$\V{H}^{-1}$ is given by $\|\V{f} \|_{-1} = \langle \V{f}, (-\Delta )^{-1}\V{f} \rangle^{1/2}.$

The equivalent weak formulation of \eqref{eq:NSE1}-\eqref{eq:NSE3} reads as follows: $\forall (\V{v},q)\in(\V{X},Q) $,  find $(\V{u},p)\in (\V{X},Q)$ satisfying 
\begin{align}
a(\V{u}, \V{v})+c^*(\V{u},\V{u},\V{v})+b(p,\V{v}) & = \left<\V{f},\V{v} \right>, \label{eq:exact1}\\
b(q,\V{u}) & = 0, \label{eq:exact2}
\end{align}
where $\V{X}:= \V{H}_0^1(\Omega), \; Q:=L_0^2(\Omega)$ and 
\begin{align*}
a(\V{u}, \V{v}) & = \nu(\nabla \V{u}, \nabla \V{v}) + \gamma (\nabla \cdot \V{u}, \nabla \cdot \V{v}), \\
b(p, \V{v}) & = -(p,\nabla\cdot \V{v}), \\
c(\V{u},\V{v},\V{w}) &= ((\V{u}\cdot \nabla)\V{v},\V{w}), \\
c^* (\V{u},\V{v},\V{w}) &= c(\V{u},\V{v},\V{w})+\frac{1}{2}\left({(\nabla \cdot \V{u})}\V{v}, \V{w} \right).
\end{align*}
The following bound holds for all $\V{u},\V{v},\V{w} \in \V{X}$, see 
for instance
\cite{GirRav1986,Tem1979}:
\begin{eqnarray}
c^*(\V{u},\V{v},\V{w})\leq \mathcal{M} \|\nabla \V{u}\|\|\nabla \V{v}\|\|\nabla \V{w}\|, \label{eq:nnl}
\end{eqnarray}
for some $\mathcal{M}=\mathcal{O}(1)$. 
The assumption on $\Omega$ is sufficient to ensure that the inf--sup (or Ladyzhenskaya--Babu\v{s}ka--Brezzi, LBB) condition holds (see~\cite{BofBreFor2013,BoyFab2013,GirRav1986}). More precisely, there exists a constant $\beta > 0$, depending only on $\Omega$, such that
\begin{equation*}
\inf_{q \in Q} \;
\sup_{\V{v} \in \V{X}}
\frac{b(q,\V{v})}{\|q\|\, \|\nabla \V{v}\|}
\geq \beta.
\end{equation*} 
We also define the div-free subspace of $\V{X}$ as usual:
\begin{equation*}
    \V{V}: = \{ \V{v} \in \V{X}:\;  b(q,\V{v})=0 \; \forall q\in Q\},
\end{equation*}
which is obviously characterized by
\begin{equation*}
    \V{V} = \{ \V{v} \in \V{X}:\; \nabla \cdot  \V{v}=0 \; \text{in} \; \Omega\}.
\end{equation*}
For the operator\; ${\rm L}:= -\nu \Delta - \gamma \nabla \divergence:\; \V{X}\rightarrow \V{H}^{-1} $ that is associated with the bilinear form $a(\cdot,\cdot)$, we define:
\begin{equation}
    \label{eq:Lnorm}
    \|\V{g}\|_{\Lmap}: = \sqrt{\langle \Lmap \V{g},\V{g} \rangle} \text{ and } 
    \|\V{f}\|_{\Linvmap}: = \sqrt{\langle \Lmap^{-1} \V{f},\V{f} \rangle}.
\end{equation}
%
The following norm equivalences can be easily verified:
\begin{lemma}
The following inequalities hold:
\begin{equation}
\begin{aligned}
\forall \V{v} \in \V{X}, \, \sqrt{\nu}\|\nabla \V{v}\| \le \| \V{v} \|_\Lmap \le \sqrt{\nu+\gamma} \|\nabla \V{v}\|, \\
\text{ and } \forall \V{f} \in \V{H}^{-1}, \, 
 \frac{1}{\sqrt{\nu+\gamma}}\| \V{f} \|_{-1} \le \| \V{f} \|_\Linvmap \le \frac{1}{\sqrt{\nu}} \| \V{f} \|_{-1}.
\end{aligned}
    \label{eq:NormEquiv}
\end{equation}    
\end{lemma}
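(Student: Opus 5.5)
The first pair of inequalities follows directly from expanding the $\Lmap$-norm; the second pair then follows by duality, using the variational characterization of the dual norms. For the first pair, take $\V{v}\in\V{X}$. By definition,
\begin{equation*}
\|\V{v}\|_{\Lmap}^2=\langle \Lmap\V{v},\V{v}\rangle = a(\V{v},\V{v}) = \nu\|\nabla\V{v}\|^2+\gamma\|\nabla\cdot\V{v}\|^2 .
\end{equation*}
The lower bound is immediate because $\gamma\ge0$. For the upper bound I will use $\|\nabla\cdot\V{v}\|\le\|\nabla\V{v}\|$ on $\V{H}^1_0(\Omega)$. This comes from the identity $\|\nabla\V{v}\|^2=\|\nabla\cdot\V{v}\|^2+\|\nabla\times\V{v}\|^2$ (or its $2$D analogue), which holds for $C_0^\infty$ fields by integration by parts and extends to $\V{X}$ by density.

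For the second pair, $\Lmap$ is symmetric and coercive, so by Lax--Milgram it is an isomorphism $\V{X}\to\V{H}^{-1}$. The key step is the representations
\begin{equation*}
\|\V{f}\|_{\Linvmap}=\sup_{\V{0}\neq\V{v}\in\V{X}}\frac{\langle\V{f},\V{v}\rangle}{\|\V{v}\|_{\Lmap}},\qquad
\|\V{f}\|_{-1}=\sup_{\V{0}\neq\V{v}\in\V{X}}\frac{\langle\V{f},\V{v}\rangle}{\|\nabla\V{v}\|}.
\end{equation*}
To prove the first, set $\V{w}=\Lmap^{-1}\V{f}$, so that $\langle\V{f},\V{v}\rangle=a(\V{w},\V{v})$. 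Cauchy--Schwarz in the inner product $a(\cdot,\cdot)$ gives $\langle\V{f},\V{v}\rangle\le\|\V{w}\|_{\Lmap}\|\V{v}\|_{\Lmap}$, with equality at $\V{v}=\V{w}$. Moreover $\|\V{w}\|_{\Lmap}^2=\langle\V{f},\Lmap^{-1}\V{f}\rangle$, which establishes the first representation.

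The second representation is the same argument with $\nu=1$, $\gamma=0$, i.e.\ with $(-\Delta)^{-1}$ in place of $\Lmap^{-1}$. This matches the paper's definition of $\|\cdot\|_{-1}$.

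Finally, insert the first pair into the denominators. Since $\sqrt{\nu}\|\nabla\V{v}\|\le\|\V{v}\|_{\Lmap}\le\sqrt{\nu+\gamma}\|\nabla\V{v}\|$, for every $\V{v}\ne\V{0}$
\begin{equation*}
\frac{1}{\sqrt{\nu+\gamma}}\frac{\langle\V{f},\V{v}\rangle}{\|\nabla\V{v}\|}\le\frac{\langle\V{f},\V{v}\rangle}{\|\V{v}\|_{\Lmap}}\le\frac{1}{\sqrt{\nu}}\frac{\langle\V{f},\V{v}\rangle}{\|\nabla\V{v}\|}
\end{equation*}
when $\langle\V{f},\V{v}\rangle\ge0$. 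The supremum is attained at such $\V{v}$, and replacing $\V{v}$ by $-\V{v}$ if necessary makes the sign restriction harmless. Taking suprema yields the second pair.

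The only steps that are not pure algebra are the bound $\|\nabla\cdot\V{v}\|\le\|\nabla\V{v}\|$, which is where the full Dirichlet condition on $\V{X}$ is needed, and the sup-characterization of $\|\cdot\|_{\Linvmap}$. Both are standard, and I expect no real obstacle.
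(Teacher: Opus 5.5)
Your proposal is correct and follows the paper's proof. The first pair comes from $\|\V{v}\|_{\Lmap}^2=\nu\|\nabla\V{v}\|^2+\gamma\|\divergence\V{v}\|^2$ together with $\|\divergence\V{v}\|\le\|\nabla\V{v}\|$ on $\V{X}$, and the second pair from the dual-norm characterization of $\|\cdot\|_{\Linvmap}$; you also supply the details the paper leaves implicit (the Cauchy--Schwarz derivation of both sup-representations and the sign issue when taking suprema).
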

\begin{proof}
    The first equivalence easily follows by using the inequality $\|\divergence \V{v}\| \le \|\nabla \V{v}\|$ that holds for $\V{v} \in \V{X}$. The second equivalence follows from the first equivalence, and using the dual norm formulation of $\| \cdot \|_\Linvmap$:
    \begin{equation}
        \label{eq:DualNorm}
        \| \V{f} \|_\Linvmap = \sup\limits_{\V{0} \neq \V{v} \in \V{X}}\frac{\left\langle \V{f},\V{v} \right\rangle}{\| \V{v} \|_{\Lmap}}.
    \end{equation}
\end{proof}
Next, we state two preliminary lemmas on non-negative sequences that will be used in the sequel, taken from \cite{Calcolo2026}.
\begin{lemma}[Sequences converging to $0$]\label{lem:SeqConv0}
 Assume that $\{a_k\}_{k=1}^\infty, \{b_k\}_{k=1}^\infty, \{c_k\}_{k=1}^\infty$ are non-negative sequences of real numbers and $\exists \, \omega_i$, $\varepsilon_i$, $i=\overline{1,2}$, such that $0<\varepsilon_i \le \omega_i$ and 
 \begin{equation*}
  \omega_1 a_{k+1} + \omega_2 b_{k+1} + c_{k+1} \leq \left(\omega_1-\varepsilon_1 \right) a_k + \left(\omega_2 - \varepsilon_2 \right) b_{k} + c_{k}.
 \end{equation*}
Then $\exists \, C \ge 0$ such that 
\begin{align*}
 \lim\limits_{n \rightarrow \infty} \left(a_k,b_k,c_k\right) & = (0,0,C).
\end{align*}
\end{lemma}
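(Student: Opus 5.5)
We prove Lemma~\ref{lem:SeqConv0} by a telescoping (Lyapunov-type) argument. Introduce the combined quantity
\begin{equation*}
E_k := \omega_1 a_k + \omega_2 b_k + c_k \ge 0 .
\end{equation*}
Since $\varepsilon_i>0$ and $a_k,b_k\ge 0$, the hypothesis gives
\begin{equation*}
E_{k+1} \le E_k - \varepsilon_1 a_k - \varepsilon_2 b_k \le E_k .
\end{equation*}
So $\{E_k\}$ is non-increasing and bounded below by $0$, hence convergent to some $E_\infty\ge 0$.

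Next, sum the inequality $\varepsilon_1 a_k + \varepsilon_2 b_k \le E_k - E_{k+1}$ from $k=1$ to $N$. The right-hand side telescopes, which gives
\begin{equation*}
\varepsilon_1\sum_{k=1}^N a_k + \varepsilon_2 \sum_{k=1}^N b_k \le E_1 - E_{N+1} \le E_1 .
\end{equation*}
The series $\sum a_k$ and $\sum b_k$ therefore converge, because their terms are non-negative and their partial sums are bounded; we use $\varepsilon_i>0$ here. In particular $a_k\to 0$ and $b_k\to 0$.

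Finally, write $c_k = E_k - \omega_1 a_k - \omega_2 b_k$. Each term on the right converges, so $c_k \to E_\infty - 0 - 0 = E_\infty =: C \ge 0$. This proves $(a_k,b_k,c_k)\to(0,0,C)$.

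The only point that needs care is the convergence of $c_k$. It does not follow from the inequality on its own terms, since $\{c_k\}$ need not be monotone. It follows only after the monotone quantity $E_k$ is seen to converge and the other two sequences are shown to vanish. The assumption $\varepsilon_i\le\omega_i$ is only needed so that the coefficients $\omega_i-\varepsilon_i$ are non-negative; the argument above does not use it.
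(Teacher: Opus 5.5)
Your proof is correct and complete. The paper does not prove this lemma itself; it imports it from \cite{Calcolo2026}, so there is no in-paper argument to compare against. Your telescoping argument is a standard, self-contained proof: $E_k=\omega_1a_k+\omega_2b_k+c_k$ is non-negative and non-increasing, $\varepsilon_1a_k+\varepsilon_2b_k$ is summable, and $c_k=E_k-\omega_1a_k-\omega_2b_k$ is then a difference of convergent sequences.

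One correction to your closing remark. Your argument does use the hypothesis $\varepsilon_i\le\omega_i$, through its consequence $\omega_i>0$. That is what gives $E_k\ge 0$, and hence both the convergence of the non-increasing sequence $E_k$ and the bound $E_1-E_{N+1}\le E_1$. This cannot be dropped. Take $\omega_2=-1$, $\varepsilon_2=1$, $a_k=c_k=0$ and $b_k=2^k$, with any admissible $\omega_1,\varepsilon_1$. The recursive inequality then reads $-2^{k+1}\le-2^{k+1}$, so it holds with equality, yet $b_k\to\infty$. What your proof genuinely needs is $\omega_i\ge 0$ and $\varepsilon_i>0$; only the non-negativity of $\omega_i-\varepsilon_i$ goes unused.

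Your proof also makes clear that $\varepsilon_2>0$ is exactly what forces $b_k\to 0$. In the paper's application, \eqref{eq:err10} lists $\varepsilon_2=0$, which would not suffice. However, \eqref{eq:err8} has no $b$-term on its right-hand side, which means $\varepsilon_2=\omega_2=1$, so the lemma does apply there.
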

\begin{lemma}[Contractivity of sequences converging to $0$]\label{ContractivitySequence}
Assume that $\{a_k\}_{k=1}^\infty, \{b_k\}_{k=1}^\infty, \{c_k\}_{k=1}^\infty$ are non-negative sequences of real numbers and $\exists \, \omega_i$, $i=\overline{1,3}$, $\varepsilon_i$, $i=\overline{1,2}$, such that $0<\varepsilon_i \le \omega_i$, $i=1,2$, 
 \begin{equation}
  \omega_1 a_{k+1} + \omega_2 b_{k+1} + \omega_3 c_{k+1} \leq \left(\omega_1-\varepsilon_1 \right) a_k + \left(\omega_2 - \varepsilon_2 \right) b_{k} + \omega_3 c_{k}
  \label{eq:Contract1}
 \end{equation}
 and 
 \begin{equation}
 c_{k} \le \tau_1 a_{k+1} + \tau_2 a_{k} + \tau_3 b_{k+1} \text{ for some positive } \tau_i, i=\overline{1,3}.
 \label{eq:Contract2}
 \end{equation}
Then there exists a sequence, which is a linear combination of $a_{k}, b_{k}, c_{k}$ that is contracting towards $0$.
\end{lemma}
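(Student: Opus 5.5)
We plan to build the contracting sequence explicitly as $d_k := \omega_1 a_k + \omega_2 b_k + \theta c_k$ with $\theta := \omega_3\delta$ for a small parameter $\delta\in(0,1]$, and to show $d_{k+1}\le \rho\, d_k$ for some $\rho<1$. The approach is to spend a small fraction of the dissipation in \eqref{eq:Contract1} to control $c_k$ through \eqref{eq:Contract2}.

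First we normalize. If $\omega_3=0$ the statement is immediate, since then $\omega_1 a_{k+1}+\omega_2 b_{k+1}\le \max_i(1-\varepsilon_i/\omega_i)(\omega_1a_k+\omega_2b_k)$. So assume $\omega_3>0$. We rewrite \eqref{eq:Contract1} as
\begin{equation*}
\omega_1 a_{k+1}+\omega_2 b_{k+1}+\omega_3 c_{k+1} + \varepsilon_1 a_k+\varepsilon_2 b_k \le \omega_1 a_k+\omega_2 b_k+\omega_3 c_k .
\end{equation*}
The dissipation terms involve $a_k,b_k$, whereas \eqref{eq:Contract2} bounds $c_k$ by $a_{k+1},a_k,b_{k+1}$. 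We therefore use \eqref{eq:Contract2} at index $k$ and split the right-hand side by type of term. The $a_k$ term is absorbed by the $\varepsilon_1 a_k$ dissipation. The $a_{k+1}$ and $b_{k+1}$ terms are placed on the left side, where they sit alongside $\omega_1 a_{k+1}$ and $\omega_2 b_{k+1}$.

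The main step is a convex combination. Let $\eta\in(0,1)$ be small, and add $\eta\omega_3\bigl(c_k - \tau_1 a_{k+1}-\tau_2 a_k-\tau_3 b_{k+1}\bigr)\le 0$ to the inequality above. This gives
\begin{equation*}
(\omega_1-\eta\omega_3\tau_1) a_{k+1}+(\omega_2-\eta\omega_3\tau_3) b_{k+1}+\omega_3 c_{k+1} \le (\omega_1-\varepsilon_1+\eta\omega_3\tau_2) a_k+(\omega_2-\varepsilon_2) b_k+(1-\eta)\omega_3 c_k .
\end{equation*}
We choose $\eta$ small enough that $\eta\omega_3\tau_2\le \varepsilon_1/2$ and $\eta\omega_3\max(\tau_1,\tau_3)\le \tfrac12\min(\varepsilon_1/2,\varepsilon_2)$. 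With this choice, set $\tilde\omega_1=\omega_1-\eta\omega_3\tau_1>0$, $\tilde\omega_2=\omega_2-\eta\omega_3\tau_3>0$ and $d_k:=\tilde\omega_1a_k+\tilde\omega_2b_k+\omega_3c_k$. The right-hand side is then at most
\begin{equation*}
\left(1-\frac{\varepsilon_1/2-\eta\omega_3\tau_1}{\tilde\omega_1}\right)\tilde\omega_1 a_k + \left(1-\frac{\varepsilon_2-\eta\omega_3\tau_3}{\tilde\omega_2}\right)\tilde\omega_2 b_k + (1-\eta)\omega_3 c_k .
\end{equation*}
Each of the three coefficient factors is strictly less than $1$ and nonnegative, because $\varepsilon_1\le\omega_1$ and $\varepsilon_2\le\omega_2$. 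Taking $\rho$ to be their maximum yields $d_{k+1}\le\rho\, d_k$ with $\rho<1$. Hence $d_k\le \rho^{k-1}d_1\to0$ geometrically, and $d_k$ is the required linear combination with positive weights.

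The main obstacle is the bookkeeping in this step. The $a_{k+1}$ and $b_{k+1}$ contributions from \eqref{eq:Contract2} reduce the left-hand weights rather than the right-hand ones, so we must check that the reduced weights $\tilde\omega_i$ stay positive and still exceed the corresponding right-hand coefficients. The smallness condition on $\eta$ is chosen precisely so that both checks hold.
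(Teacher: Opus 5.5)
Your proof is correct, and since the paper gives no proof of this lemma (it is quoted from \cite{Calcolo2026}), there is no in-paper route to compare with; your argument supplies a self-contained one. The key move is adding $\eta\omega_3\,(c_k-\tau_1a_{k+1}-\tau_2a_k-\tau_3b_{k+1})\le 0$ to \eqref{eq:Contract1}, and it is sound. Your smallness conditions give $\eta\omega_3\tau_1\le\varepsilon_1/4$ and $\eta\omega_3\tau_3\le\varepsilon_2/2$. Hence $\tilde\omega_1\ge\omega_1-\varepsilon_1/4>0$, $\tilde\omega_2\ge\omega_2-\varepsilon_2/2>0$, and $d_{k+1}\le\rho\,d_k$ with $\rho=\max\{(\omega_1-\varepsilon_1/2)/\tilde\omega_1,\,(\omega_2-\varepsilon_2)/\tilde\omega_2,\,1-\eta\}<1$. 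What this buys over a bare citation is an explicit rate and strictly positive weights, so each of $a_k,b_k,c_k$ decays geometrically; this is what Theorem~\ref{contractivity} uses.

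A few things to tidy:
\begin{itemize}
\item Your opening sentence announces $d_k=\omega_1a_k+\omega_2b_k+\omega_3\delta\,c_k$, but the sequence you actually prove contractive is $\tilde\omega_1a_k+\tilde\omega_2b_k+\omega_3c_k$. Fix or drop the announcement.
\item You tacitly assume $\omega_3\ge0$. The statement omits this, but it is needed: with $\omega_1=\omega_2=\tfrac14$, $\varepsilon_i=\tfrac1{10}$, $\omega_3=-1$, $\tau_i=1$, the sequences $a_k=b_k=c_k=2^k$ satisfy both hypotheses, and no positive combination contracts.
\item You genuinely use $\varepsilon_2>0$ to absorb the $\tau_3b_{k+1}$ term, and this hypothesis cannot be dropped. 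For $\varepsilon_2=0$, the choice $a_k=c_k=0$, $b_k\equiv1$ satisfies both hypotheses, yet no positive combination contracts.
\end{itemize}
On the last point, note that the paper's \eqref{eq:err10} lists $\varepsilon_2=0$. However, the right side of \eqref{eq:err8} contains no $b_{k-1}$ term, so the correct value there is $\varepsilon_2=\omega_2=1$, and the lemma (with your proof) does apply.
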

We also recall the well-posedness result for \eqref{eq:NSE1}-\eqref{eq:NSE3} from \cite{Tem2001}:
\begin{lemma}\label{AprioriBds}
The system \eqref{eq:NSE1}-\eqref{eq:NSE3} always has a solution which satisfies
\begin{equation}
    \| \nabla \V{u} \| \le \frac{\|\V{f}\|_{-1}}{\nu} \; \text{ and } \;\| p\| \le \frac{1}{\beta}\left(2\|\V{f}\|_{-1}
    + \Mbd_0 \frac{\|\V{f}\|_{-1}^2}{\nu^2} \right),
\end{equation}
where $\beta>0$ is the inf-sup constant. Additionally, the solution is unique under a small data condition:
\begin{equation}
	\Lambda_0 := \nu^{-2} \Mbd_0 \|\V{f}\|_{-1} < 1 \label{eq:smalldataExact},    
\end{equation}
where 
\begin{equation}
\Mbd_0 := \sup\limits_{\V{u}, \V{v}, \V{w} \in \V{X}} \dfrac{(\V{u} \cdot \nabla \V{v}, \V{w})}{\|\nabla \V{u}\| \|\nabla \V{v}\| \| \nabla \V{w}\|}. \label{eq:mmm}
\end{equation}
\end{lemma}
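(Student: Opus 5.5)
The statement is the classical existence, a priori bound and uniqueness result, so I would follow the standard Galerkin/Brouwer route of Temam, adapted to the grad-div form $a(\cdot,\cdot)$ and the skew-symmetrized trilinear form $c^*$.

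\textbf{Existence and the velocity bound.} I will first work in the div-free space $\V{V}$, where $b(q,\V{v})=0$, $c^*=c$ and $a(\V{v},\V{v})=\nu\|\nabla\V{v}\|^2$. Take a Galerkin basis of the separable space $\V{V}$ and finite-dimensional subspaces $\V{V}_m$. Define $P_m:\V{V}_m\to\V{V}_m$ through $(P_m\V{w},\V{v})_{\V{V}} = a(\V{w},\V{v})+c^*(\V{w},\V{w},\V{v})-\langle\V{f},\V{v}\rangle$. Because $c^*(\V{w},\V{w},\V{w})=0$, we get
\[
(P_m\V{w},\V{w})_{\V{V}}\ge \nu\|\nabla\V{w}\|^2-\|\V{f}\|_{-1}\|\nabla\V{w}\|,
\]
which is $\ge 0$ on the sphere $\|\nabla\V{w}\|=\|\V{f}\|_{-1}/\nu$. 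The Brouwer-type corollary then gives a zero $\V{u}_m$ with $\|\nabla\V{u}_m\|\le\|\V{f}\|_{-1}/\nu$. Extract a subsequence converging weakly in $\V{V}$ and, by Rellich, strongly in $L^2$. Passing to the limit in the trilinear term (written as $-c(\V{u}_m,\V{v},\V{u}_m)$ for smooth test $\V{v}$) gives a solution $\V{u}\in\V{V}$. The velocity bound survives by weak lower semicontinuity. This compactness passage is the only genuinely delicate step, and it is textbook.

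\textbf{Pressure and its bound.} The functional $\V{v}\mapsto\langle\V{f},\V{v}\rangle-a(\V{u},\V{v})-c^*(\V{u},\V{u},\V{v})$ vanishes on $\V{V}$. By the inf-sup condition (de Rham/Ne\v{c}as) there is a unique $p\in Q$ with $b(p,\V{v})$ equal to it. For the estimate I use $\nabla\cdot\V{u}=0$, so the grad-div term vanishes and $c^*=c$. Then
\[
\beta\|p\|\le\sup_{\V{v}}\frac{b(p,\V{v})}{\|\nabla\V{v}\|}\le \|\V{f}\|_{-1}+\nu\|\nabla\V{u}\|+\Mbd_0\|\nabla\V{u}\|^2 .
\]
Inserting the velocity bound yields exactly $\frac1\beta\bigl(2\|\V{f}\|_{-1}+\Mbd_0\|\V{f}\|_{-1}^2/\nu^2\bigr)$.

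\textbf{Uniqueness.} Let $\V{u}_1,\V{u}_2$ be two solutions and set $\V{e}=\V{u}_1-\V{u}_2\in\V{V}$. Subtract the equations and test with $\V{e}$, writing $c(\V{u}_1,\V{u}_1,\V{e})-c(\V{u}_2,\V{u}_2,\V{e})=c(\V{e},\V{u}_1,\V{e})+c(\V{u}_2,\V{e},\V{e})$; the last term vanishes. This gives
\[
\nu\|\nabla\V{e}\|^2\le\Mbd_0\|\nabla\V{e}\|^2\|\nabla\V{u}_1\|\le \Mbd_0\nu^{-1}\|\V{f}\|_{-1}\|\nabla\V{e}\|^2 ,
\]
that is, $(1-\Lambda_0)\|\nabla\V{e}\|^2\le0$, so $\V{e}=0$ under the small data condition. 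The pressures then coincide by the inf-sup condition.
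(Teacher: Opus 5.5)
Your proposal is correct and is the classical Temam argument: Galerkin approximation with Brouwer's fixed-point corollary for existence, inf-sup/de Rham recovery of the pressure, and an energy estimate for uniqueness. That is all the paper relies on here, since it states the lemma with a reference to Temam and gives no proof of its own.

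One point should be made explicit. In the uniqueness step you need $\|\nabla \V{u}_1\|\le \|\V{f}\|_{-1}/\nu$ for an \emph{arbitrary} solution, not only the one obtained from the Galerkin limit. This follows in one line by testing the equation with $\V{v}=\V{u}_1\in\V{V}$ and using $c^*(\V{u}_1,\V{u}_1,\V{u}_1)=0$ and $b(p_1,\V{u}_1)=0$.
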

\section{Steady Incremental Viscosity Splitting method}\label{Sec-Algo}
We present our new SIVS method for solving the system \eqref{eq:NSE1}-\eqref{eq:NSE3}.  In the unsteady IVS scheme \cite{Yak2023}, the splitting is performed with respect to the time derivative and the viscosity terms. Since $\V{u}_t$ is absent in our system, the splitting is performed with respect to the viscous terms enhanced by grad-div stabilization:
\begin{algorithm} \label{algo1}
For an initial guess $(\V{u}_0,p_0) = (\V{0},0)$, 
some positive parameter $\gamma>0$, 
and for $k = 1, 2, \ldots $, we compute the following until convergence: 
\begin{enumerate}
\item[{\bf Step 1:}] Given $\V{u}_{k-1} \in \V{V}$,  $p_{k-1} \in Q$, find $ \V{\tilde{u}}_{k} \in \V{X}$ solving
\begin{equation}\label{Iter-Split1}
a(\V{\tilde{u}}_{k},\V{v})
+ c^*(\V{u}_{k-1}, \V{\tilde{u}}_{k}, \V{v}) + b(p_{k-1},\V{v}) = <\V{f},\V{v} >,
\qquad \forall \, \V{v} \in \V{X}.
\end{equation}
\item[{\bf Step 2:}] Find $(\V{u}_{k}, p_{k}) \in \V{X} \times Q$ 
solving
\begin{equation}\label{Iter-Split2}
\begin{aligned}
a(\V{u}_{k} - \V{\tilde{u}}_{k}, \V{v}) + b(p_k-p_{k-1},\V{v}) & = 0, \qquad \forall \, \V{v} \in \V{X}, \\
b(q, \V{u}_{k}) & = 0, \qquad \forall \, q \in Q.
\end{aligned}
\end{equation}
\end{enumerate}
\end{algorithm}
Next, we establish the uniform boundedness and convergence results for our Algorithm \ref{algo1}.
\begin{theorem}[Uniform boundedness]\label{boundedness}
Assume that $(\Vt{u}_k, \V{u}_k, p_k) \in  (\V{X},\V{V},Q) $ is a solution of Algorithm \ref{algo1}. If 
\begin{equation}
	\Lambda := \nu^{-2} \Mbd \|\V{f}\|_{-1} < \frac{1}{\sqrt{2}} \label{eq:smalldata},    
\end{equation}
then $\| \nabla \Vt{u}_k\|, \| \nabla \V{u}_k\| , \| \nabla p_k\|_{-1}$ are uniformly bounded, and $$\displaystyle \V{u}_k \xrightarrow{\V{V}} \V{u}, \, \Vt{u}_k \xrightarrow{\V{X}} \V{u}, \text{ and } \, \nabla p_k \xrightarrow{\Linvmap} \nabla p \text{ as } k \rightarrow \infty,$$
where $(\V{u},p) $ is the unique solution of  \eqref{eq:NSE1}--\eqref{eq:NSE3}.
\end{theorem}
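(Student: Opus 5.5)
The plan is to work with the errors $\Vt{e}_k := \Vt{u}_k - \V{u}$, $\V{e}_k := \V{u}_k - \V{u}$, $r_k := p_k - p$. Subtracting the weak formulation \eqref{eq:exact1}--\eqref{eq:exact2} from the two steps of Algorithm \ref{algo1} gives the error equations. From Step 1, using $\divergence \V{u}=0$, $\V{u}_{k-1}\in\V{V}$ and $c^*(\V{u}_{k-1},\V{u},\V{v}) - c^*(\V{u},\V{u},\V{v}) = c^*(\V{e}_{k-1},\V{u},\V{v})$:
\begin{equation*}
a(\Vt{e}_k,\V{v}) + c^*(\V{u}_{k-1},\Vt{e}_k,\V{v}) + c^*(\V{e}_{k-1},\V{u},\V{v}) + b(r_{k-1},\V{v}) = 0 .
\end{equation*}
From Step 2:
\begin{equation*}
a(\V{e}_k-\Vt{e}_k,\V{v}) + b(r_k-r_{k-1},\V{v}) = 0, \qquad b(q,\V{e}_k)=0 .
\end{equation*}
In operator form, Step 2 reads $\Lmap \V{e}_k + \nabla r_k = \Lmap\Vt{e}_k + \nabla r_{k-1}$ with $\V{e}_k\in\V{V}$. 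This means $\V{e}_k$ is the $\Lmap$-orthogonal projection of $\Vt{e}_k + \Linvmap\nabla r_{k-1}$ onto $\V{V}$.

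\textbf{Energy identity for Step 2.} I will test the Step 2 equation in its $\Linvmap$-form. Writing $\Vt{e}_k + \Linvmap \nabla r_{k-1} = \V{e}_k + \Linvmap\nabla r_k$, where the two summands on the right are $\Lmap$-orthogonal (because $\langle \nabla r_k,\V{e}_k\rangle = 0$), I obtain a Pythagorean identity
\begin{equation*}
\|\Vt{e}_k\|_{\Lmap}^2 + 2\langle \nabla r_{k-1},\Vt{e}_k\rangle + \|\nabla r_{k-1}\|_{\Linvmap}^2 = \|\V{e}_k\|_{\Lmap}^2 + \|\nabla r_k\|_{\Linvmap}^2 .
\end{equation*}

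\textbf{Energy identity for Step 1.} Next I test Step 1 with $\V{v}=\Vt{e}_k$. The skew term vanishes, and since $\langle\nabla r,\V{v}\rangle = b(r,\V{v})$,
\begin{equation*}
\|\Vt{e}_k\|_{\Lmap}^2 + \langle \nabla r_{k-1},\Vt{e}_k\rangle = -c^*(\V{e}_{k-1},\V{u},\Vt{e}_k).
\end{equation*}

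\textbf{Combining.} Multiplying the Step 1 identity by two and subtracting it from the Pythagorean identity eliminates the cross term:
\begin{equation*}
\|\V{e}_k\|_{\Lmap}^2 + \|\nabla r_k\|_{\Linvmap}^2 + \|\Vt{e}_k\|_{\Lmap}^2 = \|\nabla r_{k-1}\|_{\Linvmap}^2 - 2c^*(\V{e}_{k-1},\V{u},\Vt{e}_k).
\end{equation*}
The nonlinear term is bounded using \eqref{eq:nnl}, Lemma \ref{AprioriBds} ($\|\nabla\V{u}\|\le \|\V{f}\|_{-1}/\nu$) and the norm equivalence \eqref{eq:NormEquiv}:
\begin{equation*}
2|c^*| \le 2\Lambda \|\V{e}_{k-1}\|_{\Lmap}\|\Vt{e}_k\|_{\Lmap} \le \|\Vt{e}_k\|_{\Lmap}^2 + \Lambda^2\|\V{e}_{k-1}\|_{\Lmap}^2 .
\end{equation*}
This yields
\begin{equation*}
\|\V{e}_k\|_{\Lmap}^2 + \|\nabla r_k\|_{\Linvmap}^2 \le \Lambda^2 \|\V{e}_{k-1}\|_{\Lmap}^2 + \|\nabla r_{k-1}\|_{\Linvmap}^2 .
\end{equation*}

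\textbf{Main obstacle.} This inequality is only non-expansive in the pressure, so the core difficulty is extracting decay of the pressure term. I would keep a fraction of $\|\Vt{e}_k\|_{\Lmap}^2$ on the left by using a weighted Young inequality; this is where the threshold $\Lambda<1/\sqrt2$ should enter. Concretely, I bound $2|c^*|\le \tfrac12\|\Vt{e}_k\|_{\Lmap}^2 + 2\Lambda^2\|\V{e}_{k-1}\|_{\Lmap}^2$. With $2\Lambda^2<1$ this gives an inequality of the form in Lemma \ref{lem:SeqConv0}:
\begin{itemize}
\item $a_k = \|\V{e}_k\|_{\Lmap}^2$ with $\varepsilon_1 = 1-2\Lambda^2$,
\item $b_k$ formed from $\|\Vt{e}_k\|_{\Lmap}^2$, suitably shifted,
\item $c_k = \|\nabla r_k\|_{\Linvmap}^2$.
\end{itemize}
Lemma \ref{lem:SeqConv0} then gives $\V{e}_k\to0$ and $\Vt{e}_k\to 0$ in $\Lmap$-norm, while $\|\nabla r_k\|_{\Linvmap}\to C$. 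Uniform boundedness of all three quantities follows directly from the monotone decrease of the weighted sum.

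\textbf{Identifying the pressure limit.} To show $C=0$, I use the Step 1 error equation to express $\nabla r_{k-1}$ in $\V{H}^{-1}$: $\nabla r_{k-1} = -\Lmap\Vt{e}_k - (\text{convective terms})$. Hence
\begin{equation*}
\|\nabla r_{k-1}\|_{\Linvmap} \lesssim \|\Vt{e}_k\|_{\Lmap} + \|\V{e}_{k-1}\|_{\Lmap},
\end{equation*}
with constants depending on the uniform bound of $\|\nabla \V{u}_{k-1}\|$. This is exactly the hypothesis \eqref{eq:Contract2}. Its right-hand side tends to zero, so $C=0$, and Lemma \ref{ContractivitySequence} additionally gives geometric convergence. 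Bounds on the iterates themselves then follow from the error bounds combined with Lemma \ref{AprioriBds}. Uniqueness of $(\V{u},p)$ holds because $\Lambda<1/\sqrt2<1$, which gives \eqref{eq:smalldataExact} under the standing identification of $\Mbd$ with $\Mbd_0$ on $\V{V}$.
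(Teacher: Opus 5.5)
Your proposal is correct and follows essentially the same route as the paper. Your Pythagorean identity from the $\Lmap$-orthogonal splitting of Step 2 is precisely the paper's combination of \eqref{eq:err4} and \eqref{eq:err6}, and your weighted Young bound reproduces \eqref{eq:err8} (up to an immaterial weight on the grad-div term); from there both arguments apply Lemma~\ref{lem:SeqConv0} and recover $\|\nabla r_k\|_{\Linvmap}\to 0$ from the Step 1 error equation, as in \eqref{eq:ContractionThm3}.
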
 
%
%
\begin{proof}
First, let us note that the uniqueness of the solution of \eqref{eq:NSE1}--\eqref{eq:NSE3} follows from condition~\eqref{eq:smalldata}.
Defining the errors as $\V{e}_k=\V{u}-\V{u}_k$, $\Vt{e}_k=\V{u}-\Vt{u}_k$, $\delta_k=p-p_k$, and subtracting \eqref{Iter-Split1}-\eqref{Iter-Split2} from \eqref{eq:exact1}-\eqref{eq:exact2} we get the error equations:

\begin{equation}\label{eq:err1}
a(\V{\tilde{e}}_{k},\V{v})
+ c^*(\V{e}_{k-1}, \V{u}, \V{v}) + c^*(\V{u}_{k-1}, \V{\tilde{e}_k}, \V{v}) + b(\delta_{k-1},\V{v}) = 0,
\qquad \forall \, \V{v} \in \V{X},
\end{equation}
and
\begin{equation}\label{eq:err2}
\begin{aligned}
a(\V{e}_{k} - \V{\tilde{e}}_{k}, \V{v}) + b(\delta_k-\delta_{k-1},\V{v}) & = 0, \qquad \forall \, \V{v} \in \V{X}, \\
b(q, \V{e}_{k}) & = 0, \qquad \forall \, q \in Q.
\end{aligned}
\end{equation}
Letting $\V{v}=\Vt{e}_k$ in \eqref{eq:err1} and using the skew-symmetry of the trilinear form $c^*(\cdot,\cdot,\cdot)$, we have
\begin{equation}\label{eq:err3}   
\begin{aligned}
\nu \|\nabla \V{\tilde{e}}_{k} \|^2 
+ \gamma \| \nabla \cdot \V{\tilde{e}}_{k} \|^2 
+  b(\delta_{k-1}, \V{\tilde{e}}_k )
& = - c^*(\V{e}_{k-1}, \V{u}, \Vt{e}_k) \implies \\
\nu \|\nabla \V{\tilde{e}}_{k} \|^2 
+ \gamma \| \nabla \cdot \V{\tilde{e}}_{k} \|^2 
+ b(\delta_{k-1}, \V{\tilde{e}}_k - \V{e}_k)
& = - c^*(\V{e}_{k-1}, \V{u}, \Vt{e}_k) \\
& \le \Mbd \|\nabla \V{e}_{k-1} \|\, \|\nabla \V{u} \| \, \|\nabla \V{\tilde{e}}_{k} \| \\
& \le \nu \Lambda \|\nabla \V{e}_{k-1} \|\, \|\nabla \V{\tilde{e}}_{k} \| \\
& \le \frac{\nu}{4} \|\nabla \V{\tilde{e}}_{k} \|^2 + \nu \Lambda^2 \|\nabla \V{e}_{k-1} \|^2.
\end{aligned}
\end{equation}
To deal with $b(\delta_{k-1}, \V{\tilde{e}}_k - \V{e}_k)$ term, we first test \eqref{eq:err2} with $\V{v} = \V{e}_k \in \V{V}$ to obtain that
\begin{equation}\label{eq:err4}
\nu \left( \|\nabla \V{e}_{k} \|^2 - \|\nabla \V{\tilde{e}}_{k} \|^2 + \|\nabla (\V{e}_{k}  - \Vt{e}_{k}) \|^2 \right) + 
\gamma \left( \|\divergence \V{e}_{k} \|^2 - \|\divergence \V{\tilde{e}}_{k} \|^2 + \|\divergence (\V{e}_{k}  - \Vt{e}_{k}) \|^2 \right) = 0.
\end{equation}
Moreover, the first equation of \eqref{eq:err2} implies that $\Vt{e}_{k}-\V{e}_{k} = \Linvmap \nabla (\delta_{k}  - \delta_{k-1})$  in $\V{H}^{1}_0$, and that
\begin{equation}\label{eq:err5}
\|\nabla (\delta_{k}  - \delta_{k-1}) \|^2_{\Linvmap} = \|\V{e}_{k}  - \Vt{e}_{k} \|^2_{\Lmap}
= \nu \|\nabla(\V{e}_{k}  - \Vt{e}_{k}) \|^2 + 
\gamma \|\divergence(\V{e}_{k}  - \Vt{e}_{k}) \|^2.
\end{equation}
Thus, 
\begin{equation}
    \label{eq:err6}
    \begin{aligned}        
b(\delta_{k-1}, \V{\tilde{e}}_k - \V{e}_k) & = \left\langle \nabla \delta_{k-1},  \V{\tilde{e}}_k - \V{e}_k \right \rangle \\ 
& = \left\langle \nabla \delta_{k-1}, \Linvmap \nabla (\delta_{k}  - \delta_{k-1}) \right \rangle \\
& = \frac{1}{2} \left[\|\nabla \delta_{k} \|^2_{\Linvmap} - \|\nabla \delta_{k-1} \|^2_{\Linvmap} - \|\nabla (\delta_{k}  - \delta_{k-1}) \|^2_{\Linvmap} \right] \\
& = \frac{1}{2} \left[\|\nabla \delta_{k} \|^2_{\Linvmap} - \|\nabla \delta_{k-1} \|^2_{\Linvmap} \right]  - \frac{\nu}{2}  \|\nabla (\V{e}_{k}  - \Vt{e}_{k}) \|^2
-\frac{\gamma}{2} \|\divergence(\V{e}_{k}  - \Vt{e}_{k}) \|^2.
    \end{aligned}
\end{equation}
Combine \eqref{eq:err3} with the last identity to get
\begin{equation}\label{eq:err7}   
\begin{aligned}
\frac{3\nu}{4} \|\nabla \V{\tilde{e}}_{k} \|^2 
+ \gamma \| \nabla \cdot \V{\tilde{e}}_{k} \|^2 
& + \frac{1}{2} \left( \|\nabla \delta_{k} \|^2_{\Linvmap} - \|\nabla \delta_{k-1} \|^2_{\Linvmap} \right) \\
- \frac{\nu}{2}  \|\nabla (\V{e}_{k}  - \Vt{e}_{k}) \|^2
& -\frac{\gamma}{2} \|\divergence(\V{e}_{k}  - \Vt{e}_{k}) \|^2  \le \nu \Lambda^2 \|\nabla \V{e}_{k-1} \|^2.
\end{aligned}
\end{equation}
Now divide \eqref{eq:err4} by $2$ and add to \eqref{eq:err7} to obtain
\begin{equation}\label{eq:err8}   
\begin{aligned}
\frac{\nu}{2} \|\nabla \V{e}_{k} \|^2 +  \frac{\nu}{4} \|\nabla \V{\tilde{e}}_{k} \|^2 
+ \frac{\gamma}{2}\left( \| \nabla \cdot \V{\tilde{e}}_{k} \|^2 + \| \nabla \cdot \V{e}_{k} \|^2 \right)
+ \frac{1}{2} \|\nabla \delta_{k} \|^2_{\Linvmap}
& \le \frac{1}{2} \|\nabla \delta_{k-1} \|^2_{\Linvmap} + \nu \Lambda^2 \|\nabla \V{e}_{k-1} \|^2.
\end{aligned}
\end{equation}
Now, thanks to \eqref{eq:smalldata}, we can invoke the induction argument, to conclude that
\begin{equation}\label{eq:err9}   
\begin{aligned}
\frac{\nu}{2} \|\nabla \V{e}_{k} \|^2 +  \frac{\nu}{4} \|\nabla \V{\tilde{e}}_{k} \|^2 
+ \frac{\gamma}{2}\left( \| \nabla \cdot \V{\tilde{e}}_{k} \|^2 + \| \nabla \cdot \V{e}_{k} \|^2 \right)
+ \frac{1}{2} \|\nabla \delta_{k} \|^2_{\Linvmap}
& \le \frac{1}{2} \|\nabla \delta_{0} \|^2_{\Linvmap} + \nu \Lambda^2 \|\nabla \V{e}_{0} \|^2 \\
& = \frac{1}{2} \|\nabla p \|^2_{\Linvmap} + \nu \Lambda^2 \|\nabla \V{u} \|^2.
\end{aligned}
\end{equation}
The triangle inequality then implies the uniform boundedness of the solution:
\begin{equation}
\label{eq:err9}
\exists K_i>0, \, i=\overline{1,3}, \text{ with } \|\nabla \V{u}_k\| \leq K_1, \, \|\nabla \Vt{u}_k\| \leq K_2, \text{ and } \| \nabla p_k \|_{\Linvmap} \le K_3.     
\end{equation}
Moreover, Lemma~\ref{lem:SeqConv0} implies that 
\begin{equation}
    \label{eq:err9.5}
    \lim \limits_{k \rightarrow \infty} \|\nabla \V{e}_{k} \| = \lim \limits_{k \rightarrow \infty} \|\nabla \Vt{e}_{k} \|= \lim \limits_{k \rightarrow \infty} \|\divergence \V{e}_{k} \| = \lim \limits_{k \rightarrow \infty} \|\divergence \Vt{e}_{k} \| = 0,
\end{equation}
where we took
\begin{equation} \label{eq:err10}  
\begin{aligned}
a_k & = \|\nabla \V{e}_{k} \|^2, \, b_k = \frac{\nu}{4}\|\nabla \Vt{e}_{k} \|^2 + \frac{\gamma}{2} \left( \|\divergence \Vt{e}_{k} \|^2 + \|\divergence \V{e}_{k} \|^2 \right), \, c_k = \frac{1}{2}\|\nabla \delta_{k} \|^2_{\Linvmap}, \\
\omega_1 & = \dfrac{\nu}{2},  \omega_2=1, \\
\varepsilon_1 & = \dfrac{\nu}{2} - \nu \Lambda^2>0, \, \varepsilon_2 = 0.
\end{aligned}
\end{equation}
Then passing to the limit as $k \rightarrow \infty$ in \eqref{eq:err1} implies the convergence of $\|\nabla \delta_k\|_{\Linvmap}$ to zero as $k \rightarrow \infty$. 
\end{proof}
\begin{theorem}[Contractivity]\label{contractivity} 
Let $(\V{e}_k, \Vt{e}_k, \delta_k)$ be as in the Theorem~\ref{boundedness}. If $\Lambda < \dfrac{1}{\sqrt{2}} $ as in \eqref{eq:smalldata}, then there exists $\omega_j>0$, $j=\overline{1,3}$, and $\tau \in (0,1)$ such that
 \begin{equation} 
\psi_k: = \omega_1 \DY{\| \nabla \V{e}_k \|^2}
+ \omega_2 \left( \frac{\nu}{4}\|\nabla \Vt{e}_{k} \|^2 + \frac{\gamma}{2} \left( \|\divergence \Vt{e}_{k} \|^2 + \|\divergence \V{e}_{k} \|^2 \right) \right)
+ \omega_3 \| \nabla \delta_k \|_{\Linvmap}^2
\label{eq:ContractionThm1}
 \end{equation}
is a contractive sequence satisfying
 \begin{equation}
     \psi_{k+1} < \tau \psi_k.
     \label{eq:ContractionThm2}
 \end{equation}
\end{theorem}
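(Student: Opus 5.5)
The plan is to apply Lemma~\ref{ContractivitySequence} with the same choices of $a_k,b_k,c_k,\omega_1,\omega_2,\varepsilon_1$ as in \eqref{eq:err10}. Inequality \eqref{eq:err8} is exactly \eqref{eq:Contract1} with $\omega_3=1$. There is one technical point: Lemma~\ref{ContractivitySequence} asks for $\varepsilon_2>0$, while \eqref{eq:err10} uses $\varepsilon_2=0$. I would repair this by giving up part of the $\frac{\nu}{4}\|\nabla\Vt{e}_k\|^2$ term on the left of \eqref{eq:err3}. Concretely, I would use Young with $\frac{\nu}{8}$ instead of $\frac{\nu}{4}$, at the cost of $2\nu\Lambda^2$ in place of $\nu\Lambda^2$; if that is too lossy, I would split more carefully so that $\Lambda<1/\sqrt2$ is still what is needed. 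Alternatively, I can move the $b_k$-type terms to the right-hand side, which is harmless: $b_k$ only needs to appear with a positive weight on the left. Then $(\omega_2-\varepsilon_2)b_{k}$ on the right may be taken as $0\cdot b_k$, i.e.\ $\varepsilon_2=\omega_2$, which the lemma allows.

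The essential new ingredient is \eqref{eq:Contract2}, i.e.\ controlling the pressure error $c_k=\frac12\|\nabla\delta_k\|^2_{\Linvmap}$ by velocity errors. For this I would use the error equation \eqref{eq:err1} at step $k+1$:
\begin{equation*}
\left\langle \nabla\delta_{k},\V{v}\right\rangle = -a(\Vt{e}_{k+1},\V{v}) - c^*(\V{e}_{k},\V{u},\V{v}) - c^*(\V{u}_{k},\Vt{e}_{k+1},\V{v}),
\end{equation*}
and take the supremum over $\V{v}$ in the dual-norm formula \eqref{eq:DualNorm}. The first term gives $\|\Vt{e}_{k+1}\|_{\Lmap}\le\sqrt{\nu+\gamma}\,\|\nabla\Vt{e}_{k+1}\|$. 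The second is bounded using \eqref{eq:nnl} and the a priori bound of Lemma~\ref{AprioriBds}: it is at most $\nu\Lambda\|\nabla\V{e}_k\|\,\|\nabla\V{v}\|$. The third uses the uniform bound $\|\nabla\V{u}_k\|\le K_1$ from Theorem~\ref{boundedness}: it is at most $\Mbd K_1\|\nabla\Vt{e}_{k+1}\|\,\|\nabla\V{v}\|$. Dividing by $\|\V{v}\|_{\Lmap}\ge\sqrt\nu\|\nabla\V{v}\|$ from \eqref{eq:NormEquiv} and squaring gives
\begin{equation*}
c_k \le C_1\|\nabla\Vt{e}_{k+1}\|^2 + C_2\|\nabla\V{e}_k\|^2 \le \tau_3 b_{k+1} + \tau_2 a_k ,
\end{equation*}
which is \eqref{eq:Contract2} with any $\tau_1>0$ (the $a_{k+1}$ term is not even needed). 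The constants $\tau_i$ depend on $\nu,\gamma,\Mbd,K_1,\Lambda$.

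The lemma then yields the weights $\omega_j$ and $\tau\in(0,1)$ with $\psi_{k+1}<\tau\psi_k$. Since the proof of Lemma~\ref{ContractivitySequence} is only cited, I would sketch it directly. Write \eqref{eq:Contract1} as $\Phi_{k+1}+\varepsilon_1a_k+\varepsilon_2 b_k\le\Phi_k$, where $\Phi_k=\omega_1a_k+\omega_2b_k+\omega_3c_k$. Use \eqref{eq:Contract2} in the shifted form $c_k\lesssim a_k+b_{k+1}+a_{k+1}$. Then add a small multiple $\eta$ of $c_k$ to the dissipated quantity, absorbing the resulting $a_{k+1},b_{k+1}$ terms into the left side. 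This gives $\Phi_{k+1}\le(1-\eta')\Phi_k$ after adjusting weights. I expect this to be the main obstacle: the index shift $b_{k+1}$ in the pressure bound forces a redefinition of $\psi$, since the extra terms land on the new-iterate side. I would handle it by absorbing them on the left with $\eta$ small relative to $\omega_2$ and $\varepsilon_1$.
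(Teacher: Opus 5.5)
Your proposal is correct and follows the paper's route. Like the paper, you take \eqref{eq:err8} as \eqref{eq:Contract1}, get \eqref{eq:Contract2} from a dual-norm (inf-sup) bound on the pressure error read off from \eqref{eq:err1}, and conclude with Lemma~\ref{ContractivitySequence}. Your handling is cleaner on two points: applying \eqref{eq:err1} at step $k+1$ to bound $\delta_k$ by $\Vt{e}_{k+1}$ and $\V{e}_k$ is the consistent indexing that \eqref{eq:ContractionThm3} needs, and $\varepsilon_2=\omega_2$ is the right fix for the paper's $\varepsilon_2=0$ because no $b_k$ term appears on the right of \eqref{eq:err8}. Your first idea of using Young with $\nu/8$ is unnecessary, and it would tighten the hypothesis to $\Lambda<1/2$.
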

\begin{proof}
The proof continues the previous proof by invoking the Lemma \ref{ContractivitySequence}. Note that \eqref{eq:err10} is equivalent to \eqref{eq:Contract1}.
However, applying the inf-sup condition in \eqref{eq:err1} gives that 
\begin{equation}
\label{eq:ContractionThm3}
  \begin{aligned}
   \|\nabla \delta_k\|_{-1} & \le \left( \nu + \Mbd K_1\right)  \| \nabla \Vt{e}_k \| 
   + \Mbd \|\nabla \V{u} \|  \DY{ \| \nabla \V{e}_{k-1} \|} 
   + \gamma \| \divergence \Vt{e}_k \|,
    \end{aligned}
\end{equation}
which is equivalent to \eqref{eq:Contract2}. The proof then directly follows from Lemma~\ref{ContractivitySequence} and the norm equivalence \eqref{eq:NormEquiv}. 
\end{proof}
\begin{remark}
    \begin{equation}
        \label{eq:Necas}
      \exists C > 0 \;  \forall q \in Q: \| \nabla  q \|_{-1} \ge C \| q\|,
    \end{equation}
we can conclude that $\|p_k\|$ is uniformly bounded \DY{in $L^2(\Omega)$}, and $\|\delta_k\| \rightarrow 0$ as $k \rightarrow \infty$ as well. 
\end{remark}
\subsection{Algebraic splitting viewpoint}
After performing a standard finite element discretization of Algorithm \ref{algo1}, we get that Step 1 is equivalent to
\begin{equation}
\label{eq:Yosida1}
A_{k-1} \Vt{U}_k = \V{F} - B^T \V{P}_{k-1},  
\end{equation}
and Step 2 is
\begin{align}
\widetilde{A} \left(\V{U}_k-\Vt{U}_k \right) + B^T \left(\V{P}_k - \V{P}_{k-1} \right) &= \V{0}, \label{eq:Yosida2}  \\
B \V{U}_k &= \V{0}. \label{eq:Yosida3}
\end{align}

In \eqref{eq:Yosida1}-\eqref{eq:Yosida3}, $\widetilde{A}$ is matrix corresponding to the bilinear form $a(\cdot,\cdot)$, $A_{k-1} = \widetilde{A} + C_{k-1}$ with $C_{k-1}$ being the matrix corresponding to $c^*(\Vt{u}_{k-1},\cdot,\cdot)$, and $B$ is matrix corresponding to the mixed form $b(\cdot,\cdot)$.

Eliminating $\V{U}_k$ in \eqref{eq:Yosida2}-\eqref{eq:Yosida3}, yields that
\begin{equation}
    \label{eq:Yosida4}
   B\Vt{U}_k - B\widetilde{A}^{-1} B^T \left(\V{P}_k - \V{P}_{k-1} \right) = \V{0}. 
\end{equation}
Then \eqref{eq:Yosida1} and \eqref{eq:Yosida4} can be written in a block form as
\begin{equation}\label{eq:Yosida5}
\begin{bmatrix} 
A_{k-1} & 0 \\ 
B &  -B\widetilde{A}^{-1} B^T 
\end{bmatrix} 
\begin{bmatrix} \Vt{U}_k \\
\V{P}_k - \V{P}_{k-1} 
\end{bmatrix} 
= 
\begin{bmatrix} \V{F} - B^T \V{P}_{k-1} 
\\ 
\V{0}
\end{bmatrix},
\end{equation}
with SPD Schur complement matrix $S=-B \widetilde{A}^{-1} B^T $. Since $S$ is spectrally equivalent to the pressure mass matrix \cite{BenGolLie2005}, it can be easily preconditioned. Moreover, $S$ and the associated preconditioner are assembled only once. 

Noting that \eqref{eq:Yosida2} also implies
$\Vt{U}_k = \V{U}_k + \widetilde{A}^{-1} B^T \left(\V{P}_k - \V{P}_{k-1} \right) \implies$
\begin{equation}\label{eq:Yosida6}
\begin{bmatrix} 
\Vt{U}_k \\ 
\V{P}_k - \V{P}_{k-1} 
\end{bmatrix} = 
\begin{bmatrix} 
I & \widetilde{A}^{-1} B^T \\
0 & I 
\end{bmatrix} 
\begin{bmatrix} \V{U}_k  \\ 
\V{P}_k - \V{P}_{k-1} 
\end{bmatrix}.
\end{equation}
Combining \eqref{eq:Yosida5} and \eqref{eq:Yosida6} yields
\begin{equation}\label{eq:Yosida7}
\begin{bmatrix} 
A_{k-1} & 0 \\ 
B &  -B\widetilde{A}^{-1} B^T 
\end{bmatrix}
\begin{bmatrix} 
I & \widetilde{A}^{-1} B^T \\
0 & I 
\end{bmatrix} 
\begin{bmatrix} \V{U}_k  \\ 
\V{P}_k - \V{P}_{k-1} 
\end{bmatrix}
= \begin{bmatrix} \V{F} - B^T \V{P}_{k-1} \\ \V{0} \end{bmatrix} \implies
\end{equation}
\begin{equation}\label{eq:Yosida8}
\begin{bmatrix} 
A_{k-1} & A_{k-1}\widetilde{A}^{-1} B^T \\ 
B &  0 
\end{bmatrix}
\begin{bmatrix} \V{U}_k  \\ 
\V{P}_k - \V{P}_{k-1} 
\end{bmatrix}
= \begin{bmatrix} \V{F} - B^T \V{P}_{k-1} \\ \V{0} \end{bmatrix}. 
\end{equation}
Thus, in the notation of \eqref{eq:PicardSystem2}, our Algorithm \ref{algo1} corresponds to picking $H_1=H_2 = \widetilde{A}^{-1}$. 
\section{Numerical experiments}\label{Sec-Simulation}
We present three numerical experiments to verify the theoretical findings and to demonstrate the effectiveness of our Algorithm \ref{algo1} using FreeFem++ \cite{Hec2012} software. The first test is a convergence test with a manufactured solution. Next, we test our scheme on a classical 2D lid-driven cavity flow problem at various Reynolds numbers. The last numerical test is on a classical 3D flow around a circular cylinder problem from \cite{SchTurDurKraRan1996}. We consider the $(\V{P}_2,P_1)$ Taylor-Hood finite element pair throughout all computations, and use direct solvers for 2D problems. For the 3D problem, Step 1 \eqref{eq:Yosida1} is solved via the FGMRES method with the Additive Schwarz Method preconditioner. The saddle point system in Step 2 \eqref{eq:Yosida2} is 
also solved using the FGMRES method, preconditioned by a block-multiplicative mAL preconditioner \cite{MOULIN2019718}. In all tests, the stopping criterion is taken as 
$$\max\left \lbrace \dfrac{\|\V{P}_k-\V{P}_{k-1}\|_{\ell_2}}{\| \V{P}_k \|_{\ell_2}} \right \rbrace \leq 10^{-6}.$$ 
%
\subsection{Convergence test}
We consider the following manufactured solution of \eqref{eq:NSE1}-\eqref{eq:NSE3}:
\begin{align}\label{eq:truesol1}
	\V{u}=\left(%
	\begin{array}{c}
		2 (x^2 - x)^2 (y^2 - y)(2y - 1) \\
	   -2 (y^2 - y)^2 (x^2 - x)(2x - 1)%
	\end{array}%
	\right),\quad
	p = (2x-1)(2y-1).
\end{align}
The right-hand side $\V{f}$ corresponding to \eqref{eq:truesol1} is computed to satisfy \eqref{eq:NSE1}. We choose $\nu=1$, and test on varying spatial mesh sizes $h$. The initial velocity and pressure fields are taken to be zero. 

Corresponding errors in various norms and convergence rates for the Algorithm \ref{algo1} are presented in Table \ref{tab:RateTable}. The expected rates of convergence are achieved for all quantities, and the number of iterations is uniform.
\begin{table}[H]
 \begin{center} 
 \caption{Errors and rates for $\gamma=1$, $\nu=1$, and varying mesh sizes.}\label{tab:RateTable} 
\begin{tabular}{|c|c|c|c|c|c|c|c|c|c|}
\hline
$h$ & $\|\V{e}\|$& rate & $\|\nabla \V{e}\|$ & rate & $\|\divergence \V{e}\|$ & rate & $\|\delta\|$ & rate & nb of iterations \\
\hline
\hline 
1.41e-1 & 2.29e-3 & - & 8.66e-2 & - & 2.62e-1 & - & 2.64e-2 & - & 9 \\
\hline
7.07e-2 & 1.54e-4 & 3.89 & 1.20e-2 & 2.85 & 6.89e-2 & 1.93 & 2.36e-3 & 3.48 & 9 \\
\hline
3.54e-2 & 9.85e-6 & 3.97 & 1.55e-3 & 2.95 & 1.75e-2 & 1.98 & 3.11e-4 & 2.92 & 9 \\
\hline
1.77e-2 & 6.20e-7 & 3.99 & 1.96e-4 & 2.98 & 4.39e-3 & 1.99 & 6.60e-5 & 2.24 & 9 \\
\hline
8.84e-3 & 3.90e-8 & 3.99 & 2.47e-5 & 2.99 & 1.10e-3 & 2.00 & 1.61e-5 & 2.03 & 9 \\
\hline
\end{tabular}
\end{center}
\end{table}
%
%
%
\subsection{2D lid driven cavity flow}\label{lidalg1}
In this subsection, we test our Algorithms on a well-known 2D lid-driven cavity flow problem. The computational domain is $\Omega = (0,1)^2$, where the top lid is moving in the positive $x$ direction with a unit speed. The boundary conditions are taken to be no-slip along the remaining walls. To avoid the irregularity of the solution at the upper corners, we consider a regularized initial data at the upper boundary due to \cite{Frutos2016}.
%
We run the code for five different values of Reynolds number, ${\rm Re} = 100, 400, 1000, 3200$, and $5000$. In all runs, we set $\gamma = 1$. Meshes with various resolutions were tested, and we only report the results obtained on the finest $128 \times 128$ uniform mesh. We initiated the ${\rm Re}=100$ simulation with zero, while the runs at higher ${\rm Re}$ are started from the converged solution of the previous ${\rm Re}$ simulations.
For numerical comparison, in Figure \ref{fig:Cavity1}, we plot the values of the velocity components at the centerlines through the domain against the reference values of Ghia et. al. \cite{Ghia82}.
\begin{figure}[!h]
\centering
\includegraphics[scale=0.24]{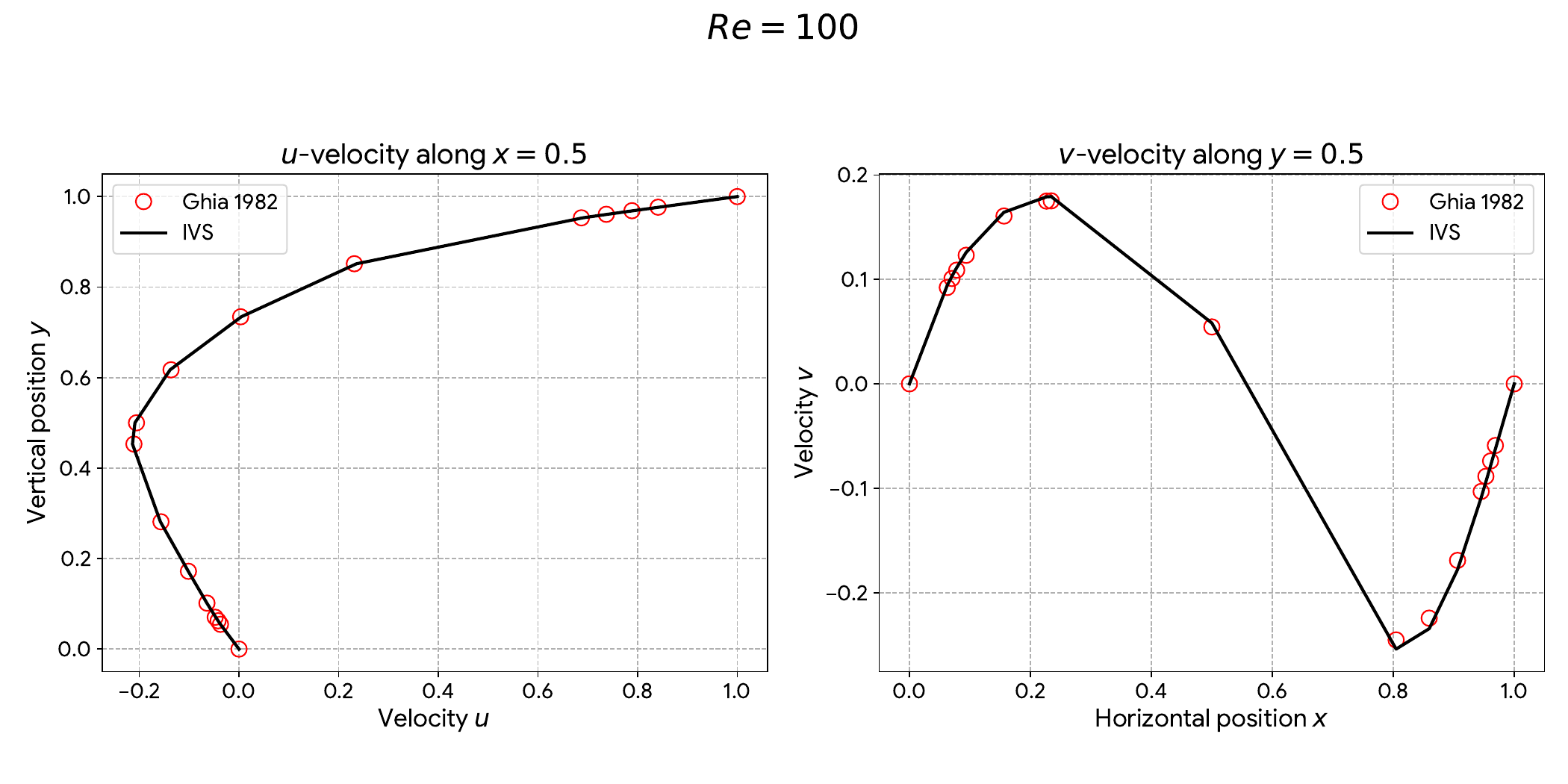} \\
\centering
\includegraphics[scale=0.24]{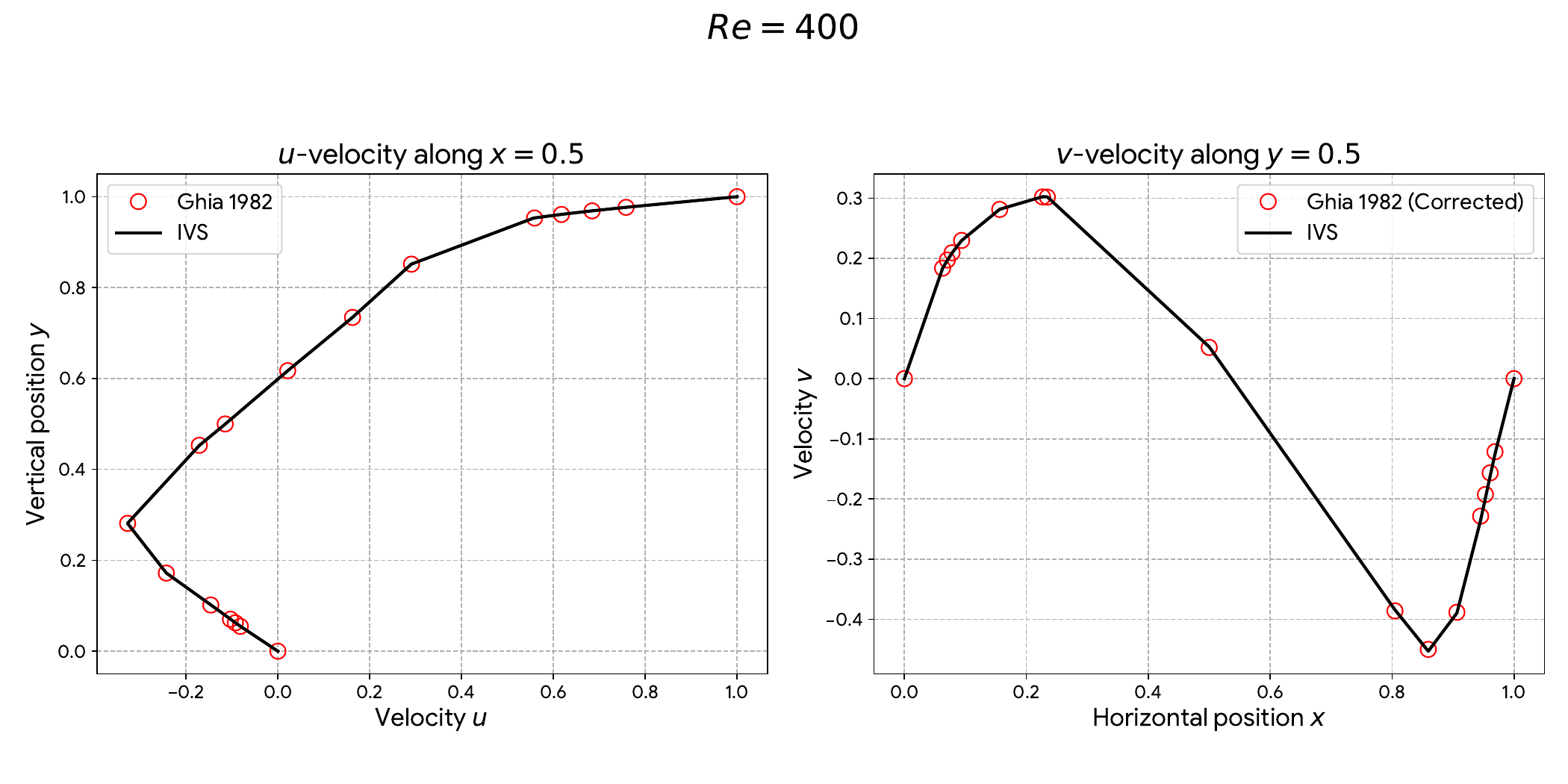} \\
\centering
\includegraphics[scale=0.24]{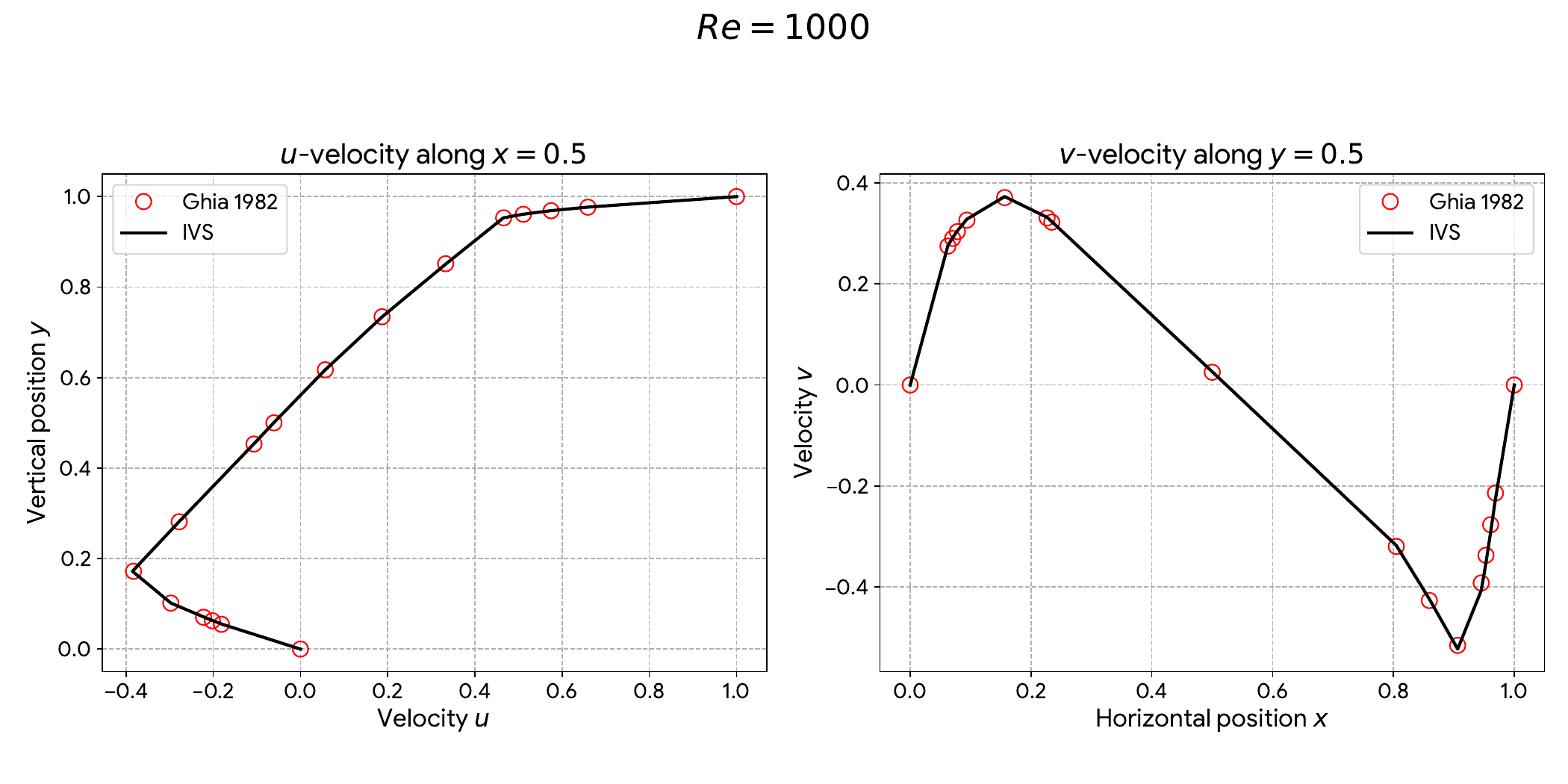} \\
\centering
\includegraphics[scale=0.24]{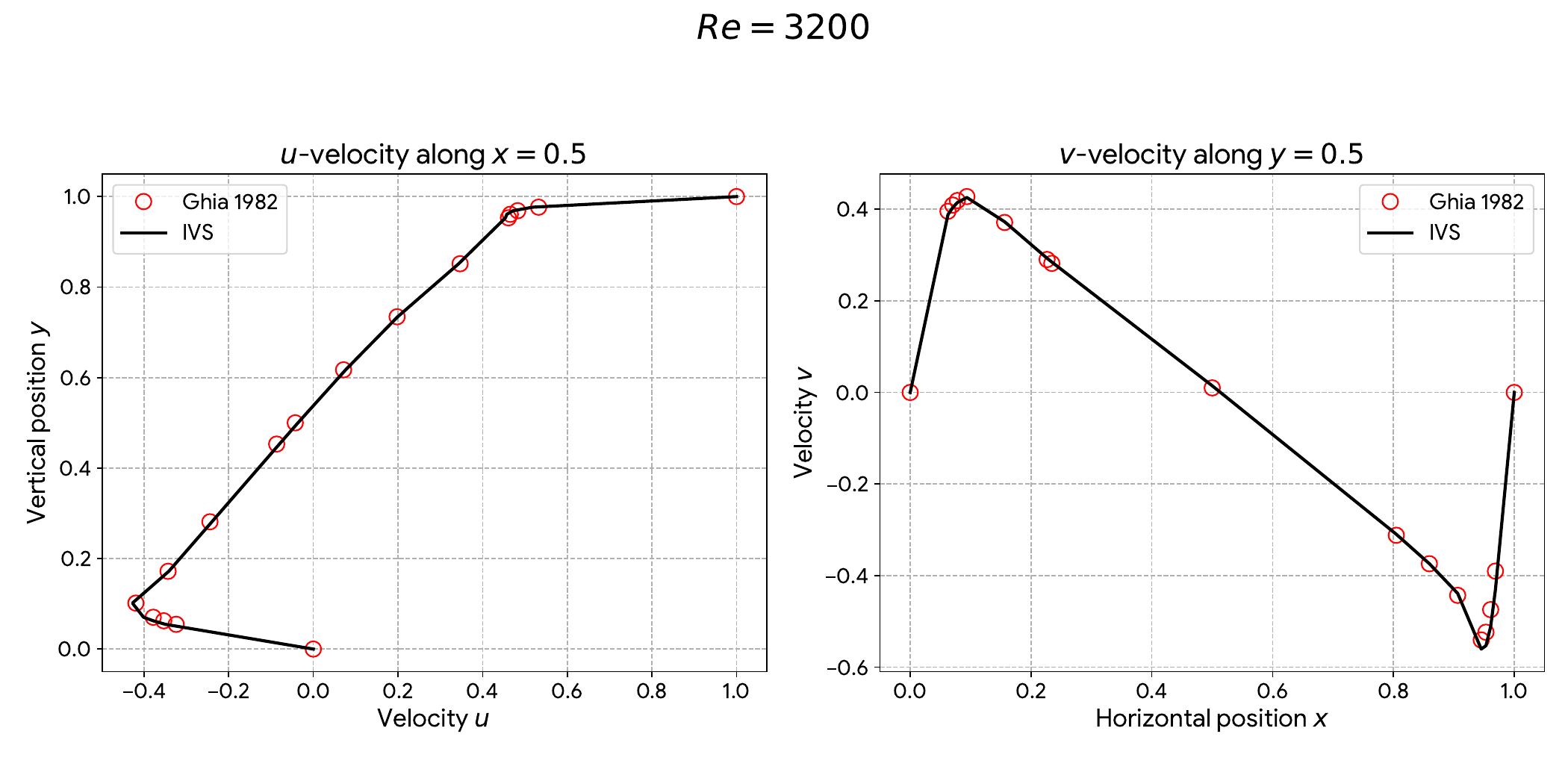} \\
\includegraphics[scale=0.24]{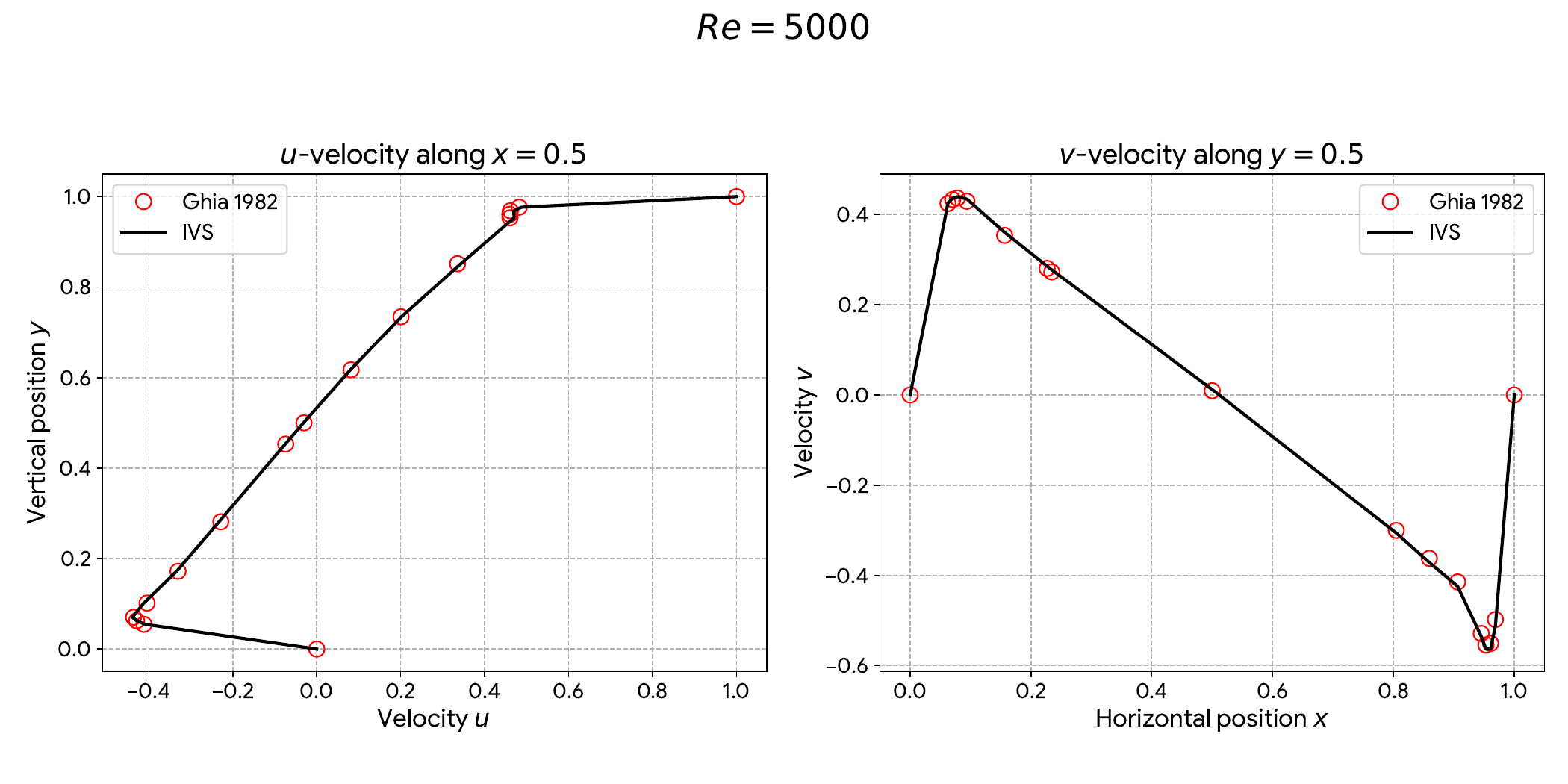}
\caption{Velocity components along domain centerlines}
\label{fig:Cavity1}
\end{figure}
The velocity streamlines superimposed on speed contours are shown in Figure \ref{fig:Cavity2}, which perfectly match the reference results \cite{Ghia82}. All plots demonstrate the correct formation of eddies in the corners. 

In Table \ref{tab:CavityIts}, we also present the number of iterations that were needed to attain convergence for our SIVS Algorithm and the IPY Algorithm of \cite{rebholz2019efficient}. Both schemes have similar, relatively robust performance in this regard. However, we expect that our scheme will consume less CPU time with iterative solvers, cf. \eqref{eq:Yosida7} and \eqref{eq:IPY}. 
\begin{table}[h]
\centering
\caption{Number of iterations required for convergence at various Reynolds numbers}
\label{tab:CavityIts}
\begin{tabular}{|c|c|c|}
\hline
${\rm Re}$ & \textbf{SIVS} & \textbf{IPY of \cite{rebholz2019efficient}} \\
\hline
100   &   17  & 12                      \\ \hline
400   &   30  & 28                      \\ \hline
1000  &   33  & 35                      \\ \hline
3200  &   62  & 57                      \\ \hline
5000  &   68  & 76                      \\ \hline
\end{tabular}
\end{table}
%
\begin{figure}[!htbp]
\centering
\begin{subfigure}{0.5\textwidth}
  \centering
\includegraphics[scale=0.29]{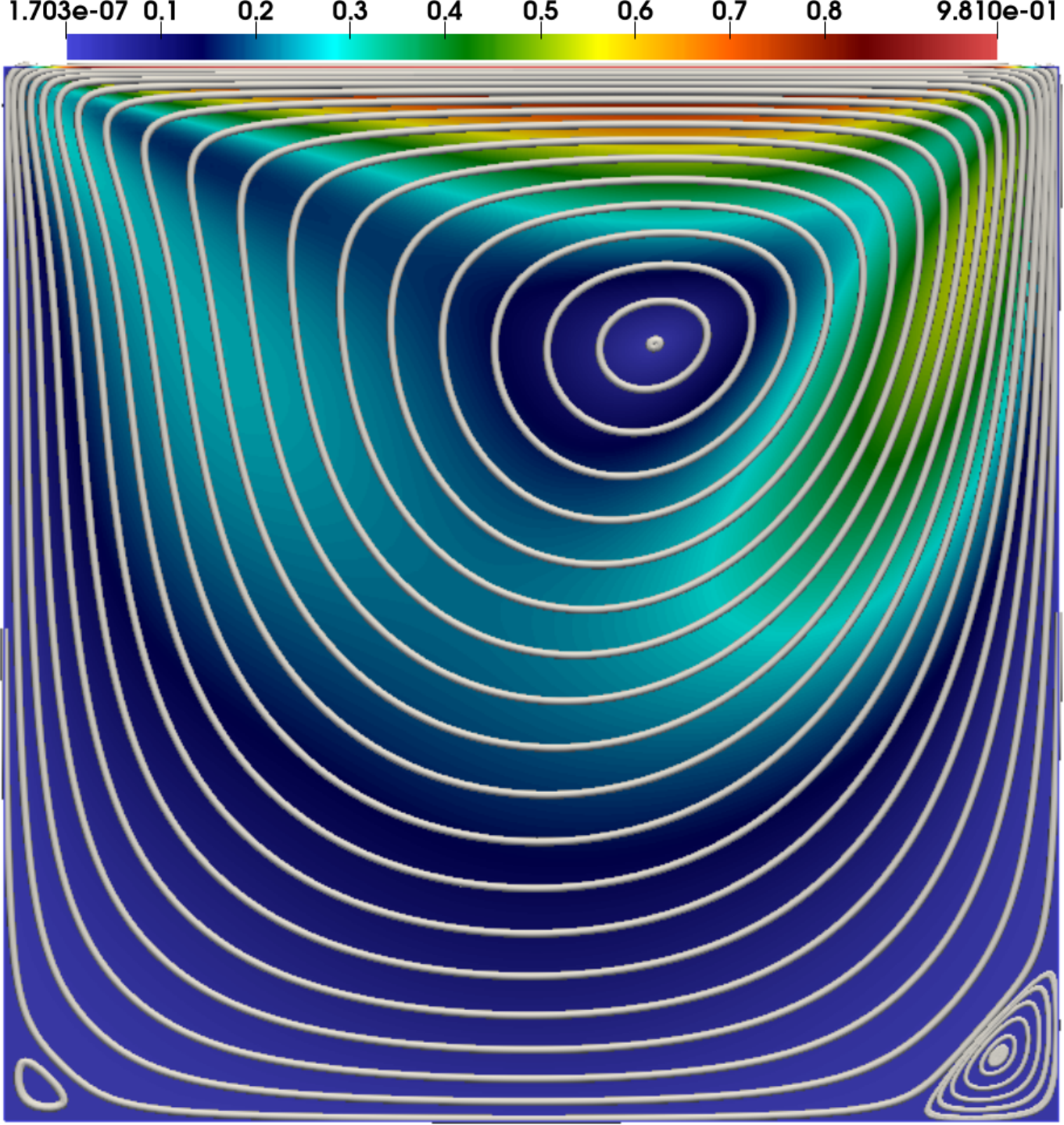}
\end{subfigure}%
\begin{subfigure}{0.5\textwidth}
  \centering
\includegraphics[scale=0.29]{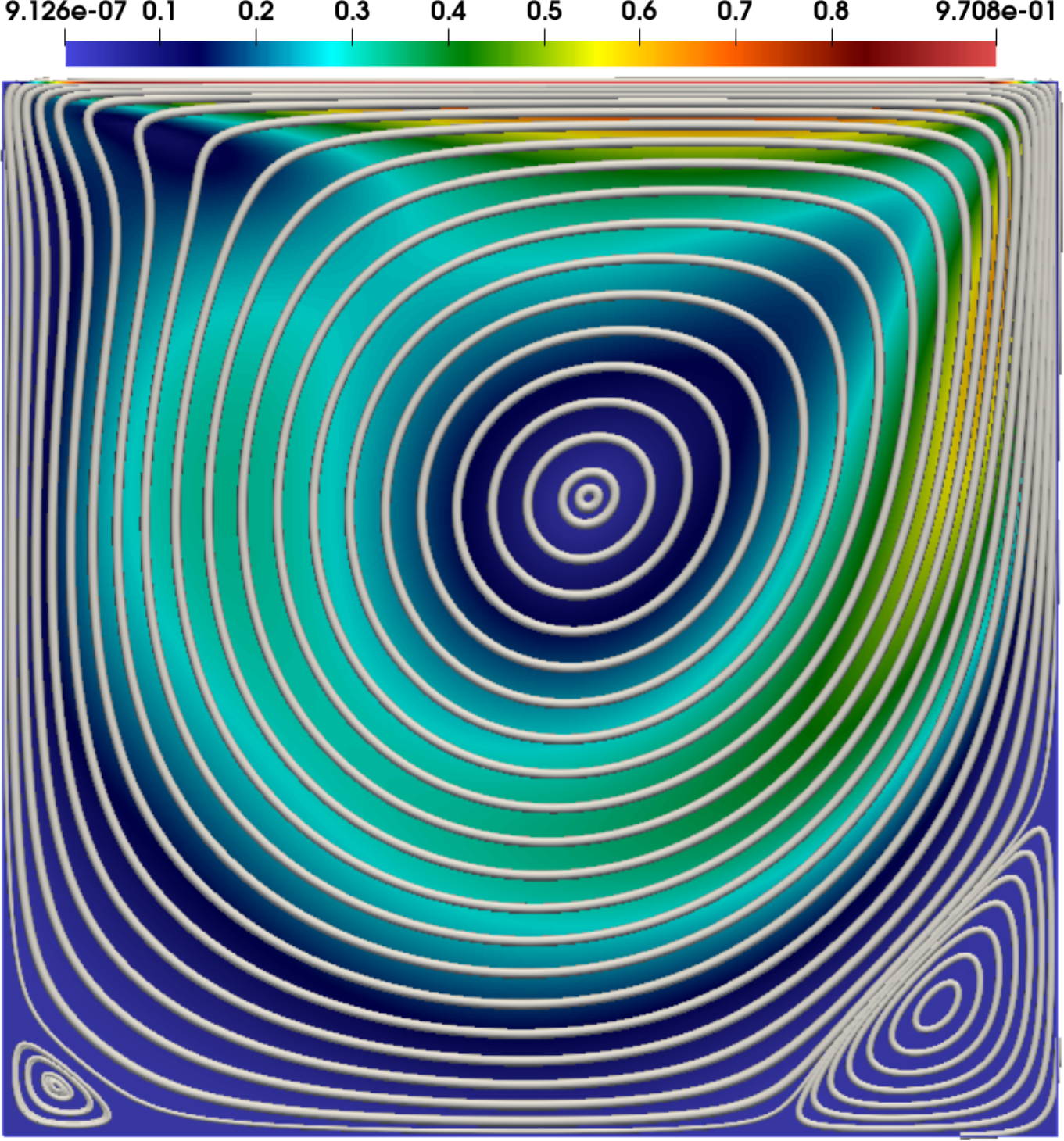}
\end{subfigure}
\centering
\begin{subfigure}{0.5\textwidth}
  \centering
\includegraphics[scale=0.29]{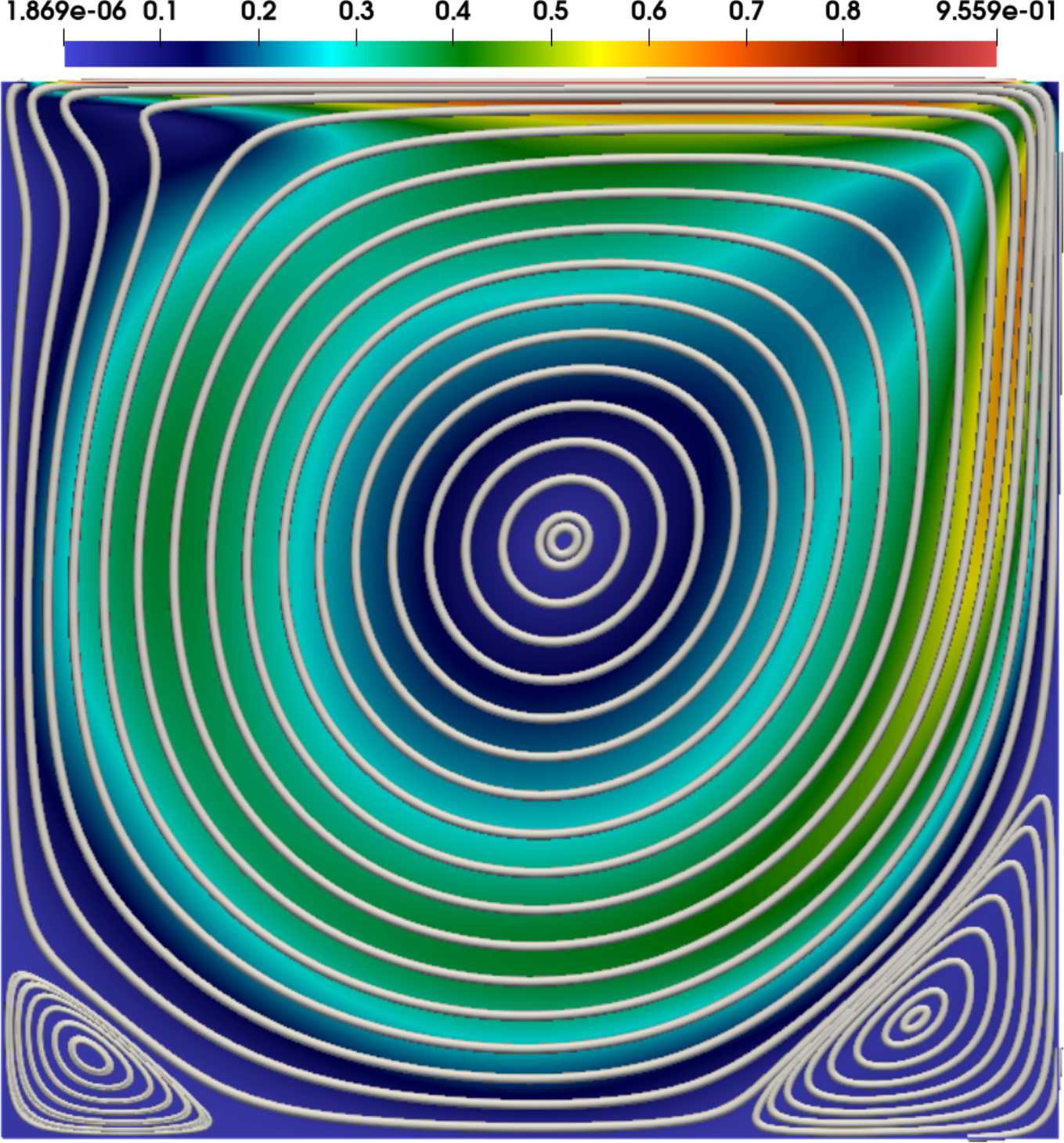}
\end{subfigure}%
\begin{subfigure}{0.5\textwidth}
  \centering
\includegraphics[scale=0.29]{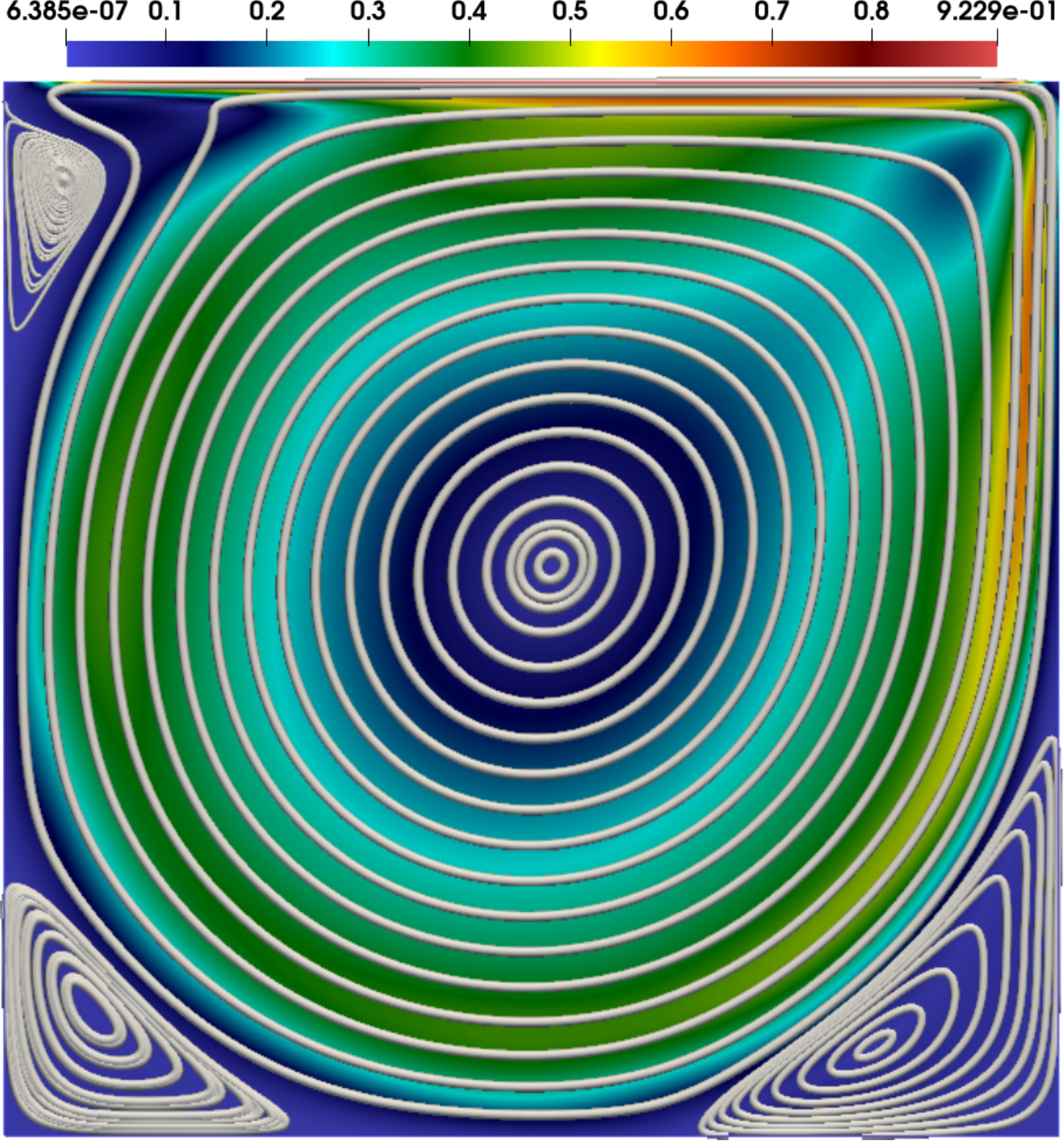}
\end{subfigure}
\begin{subfigure}{0.5\textwidth}
  \centering
\includegraphics[scale=0.29]{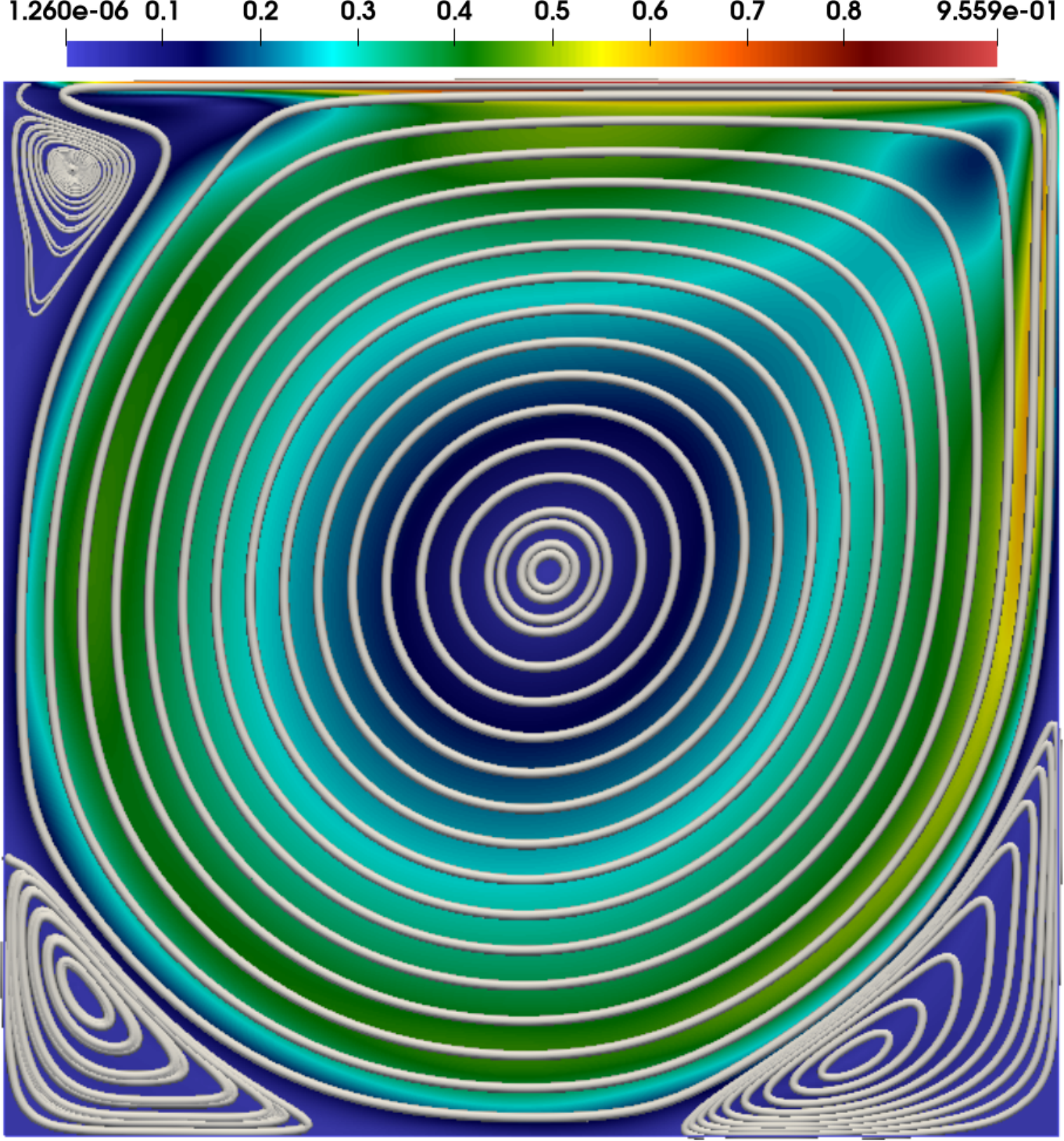}
\end{subfigure}
\caption{Velocity streamlines: Row 1 - ${\rm Re}=100, 400$; row 2 - ${\rm Re}=1000, 3200$; row 3 - ${\rm Re}=5000$}
\label{fig:Cavity2}
\end{figure}
%
\subsection{Influence of $\gamma$}
Here we examine the influence of $\gamma$ on the convergence rate of our Algorithm \ref{algo1} on a 2D lid-driven cavity test. For simplicity, we chose a coarser $64 \times 64$ mesh. We tested the range of values of $\gamma$. The iteration counts are given in Table \ref{tab:gammaTest}, which clearly indicates that a larger value of $\gamma$ accelerates the convergence to steady-state. However, as it is well-known \cite{BenGolLie2005}, choosing $\gamma \gg 1$ also deteriorates the conditioning of the linear systems. This is especially true for the system arising in Step 1, cf. \eqref{Iter-Split1}. On the other hand, thanks to the adapted right-hand side, the conditioning of the linear system arising from Step 2 of the SIVS scheme with respect to $\gamma$ might be mild, cf. \cite{https://doi.org/10.1002/num.22859,angot2012new}.
 \begin{table}[h!]
     \centering
\begin{tabular}{|c|c|}
\hline
$\gamma$ & \textbf{Nb of iterations} \\ \hline
1e-6 &  71  \\ \hline
1e-3 & 59   \\ \hline
1 &  12  \\ \hline
1e+2 &  10  \\ \hline
1e+3 &  8  \\ \hline
1e+6 &  3  \\ \hline
\end{tabular}
     \caption{Iterations for various values of $\gamma$ on a $64 \times 64$ mesh for 2D lid-driven cavity test}
     \label{tab:gammaTest}
 \end{table}

\subsection{3D flow past a circular cylinder}\label{sec-3DCylinder}
Our last test problem considers the 3D flow around a cylinder benchmark problem of \cite{SchTurDurKraRan1996}. The computational domain and the finite element mesh used in this study are shown in Figure~\ref{fig:3DTurekMesh}. 
\begin{figure}[!hbtp]
\centering
\includegraphics[scale=0.24]{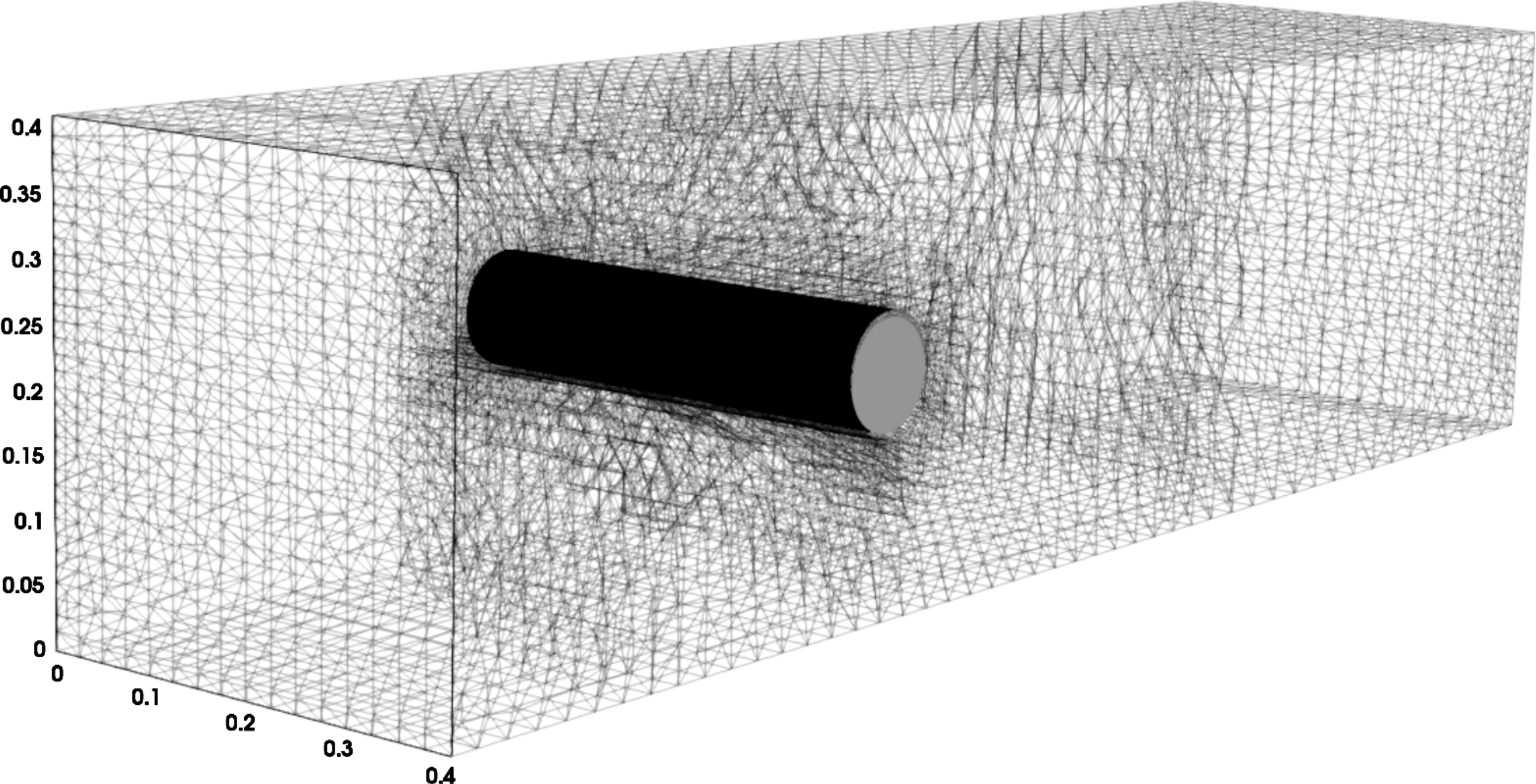} \\
\caption{The finite element mesh with a total of $694,430$ dofs and $0.0259 \le h \le 0.0612$.}
\label{fig:3DTurekMesh}
\end{figure}
Due to limited computational resources, we reduced the channel width from the original $2.5$ to $ 1.6$. Accordingly, $\Omega=1.6 \times 0.41 \times 0.41$, and the cylinder has a diameter of $D = 0.1$. 
The standard boundary conditions are imposed:
\begin{equation}\label{poiseuille3D}
\begin{array}{rcl}
\V{u} &=& \left(u_1:= 16 u_m  \,  \displaystyle \frac{yz(H-y)(H-z)}{H^4}\, ,\, u_2:= 0, \, u_3:= 0\right) \quad \text{on } x=0, \text{ (inlet) } \\
-\nu (\V{n} \cdot \nabla )\V{u} + p \V{n} &=& \V{0} \quad \text{on } x=1.6, \text{ (outlet) } \\
\V{u}(x,y) &=& \V{0} \quad \text{elsewhere}, 
\end{array}
\end{equation}
where the characteristic velocity is defined by $u_m = 0.45$.

Since we impose do-nothing outflow boundary conditions at the outlet with a simple geometry, we expect that this should have a negligible effect on the drag, lift coefficients, and the pressure drop about the cylinder. The external force is set to $\V{f} = \V{0}$, and the kinematic viscosity is $\nu = 10^{-3}$. The values of the coefficients reported in Table \ref{tab:Cylinder}, computed using a volume integrals approach \cite[Appendix D]{john2016finite}, have reasonable accuracy, even though the mesh is clearly underresolved. The convergence was achieved in $11$ nonlinear iterations.
\begin{table}[h]
\centering
\caption{Values of the drag, lift coefficients, and the pressure drop for the 3D cylinder problem}
\label{tab:Cylinder}
\begin{tabular}{|c|c|c|}
\hline
\textbf{Coefficient} & \textbf{SIVS} & \textbf{Reference intervals from  \cite{SchTurDurKraRan1996}} \\
\hline
drag   &   6.11785  & [6.05, 6.25]                      \\ \hline
lift   &   0.008738  & [0.008,0.01]                      \\ \hline
$\Delta p$  &   0.1744  & [0.165,0.175]                      \\ \hline
\end{tabular}
\end{table}
\begin{figure}[!h]
\centering
\includegraphics[scale=0.24]{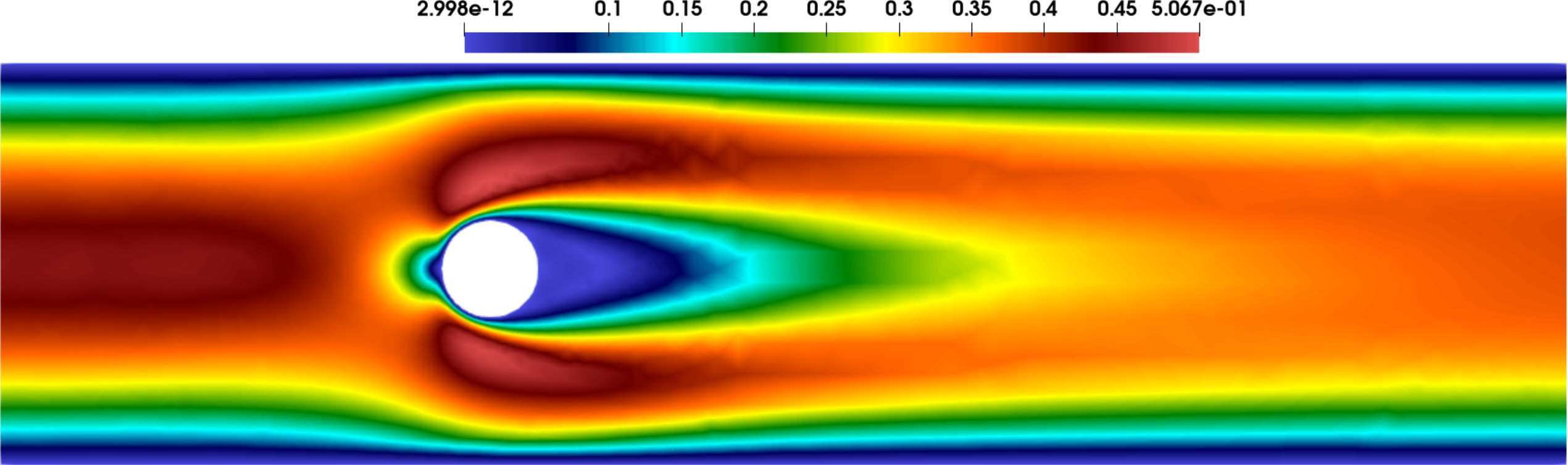} \\
\centering
\includegraphics[scale=0.24]{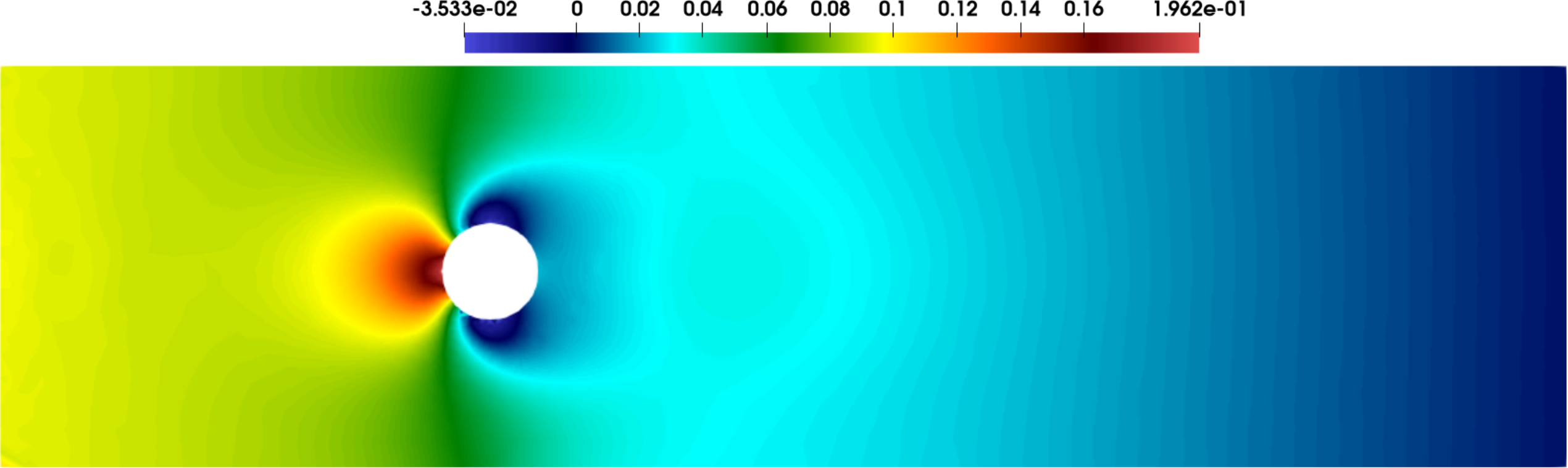} 
\caption{Finite element mesh, speed and pressure contours at mid $z$-plane.}
\label{fig:Cylinder}
\end{figure}
\section{Conclusions}\label{Sec-Conclusion}
We presented a new efficient scheme for solving the steady Navier-Stokes equations with grad-div stabilization. We proved boundedness and convergence under a reasonably smallness assumption on the data.
The algebraic splitting interpretation of the SIVS scheme shows that our scheme results in significantly simpler and cheaper linear systems compared to standard Picard linearizations, making it suitable for large 3D problems.

Numerically, the SIVS scheme demonstrated expected convergence rates and reproduced standard benchmarks accurately, including the 2D lid-driven cavity (up to ${\rm Re} = 5000$) and a 3D flow past a circular cylinder, while maintaining low iteration counts.  Future work will focus on extending this approach to steady MHD and other multiphysics problems.

\bibliographystyle{abbrv}
\bibliography{references}
\end{document}